\documentclass[oneside,english,reqno]{amsart}

\usepackage{algorithm}
\usepackage{algorithmic}
\usepackage{multirow}

\usepackage{enumitem}

\usepackage[T1]{fontenc}
\usepackage[latin9]{inputenc}
\setcounter{tocdepth}{1}
\usepackage{babel}
\usepackage{amsmath}
\usepackage{amsthm}
\usepackage{amssymb}

\usepackage{comment}

\usepackage{graphicx}

\usepackage{graphicx}
\usepackage{tikz}
\usetikzlibrary{shapes.misc}
\usepackage{float}

\usepackage[unicode=true,pdfusetitle,
bookmarks=true,bookmarksnumbered=false,bookmarksopen=false,
breaklinks=false,pdfborder={0 0 1},backref=false,hidelinks]
{hyperref}

\makeatletter

\numberwithin{equation}{section}
\numberwithin{figure}{section}

\makeatother

\newtheorem{theorem}{Theorem}

\newtheorem{definition}{Definition}
\newtheorem{example}[theorem]{Example}

\newtheorem{remark}{Remark}

\newtheorem{assumption}{Assumption}

\begin{document}

\title[Finding global solutions for a class...]{Finding global solutions for a class of possibly nonconvex QCQP problems through the S-lemma
}

\keywords{QCQP problems \and KKT conditions \and Jakubovich lemma \and S-lemma \and SDP relaxation \and SOCP relaxation \and S-QCQP}

\author{
	Ewa M. Bednarczuk$^{1}$
}
\author{
	Giovanni Bruccola$^2$
}

\thanks{$^1$ Systems Research Institute, Polish Academy of Sciences, Newelska 6, 01-447 Warsaw
              \email{Ewa.Bednarczuk@ibspan.waw.pl}      
}
\thanks{$^2$  Systems Research Institute, Polish Academy of Sciences, Newelska 6, 01-447 Warsaw\\
              the research is supported by the ITN Marie-Curie Project TraDe-Opt
               \email{Giovanni.Bruccola@ibspan.waw.pl}
}

\begin{abstract}
In this paper we provide necessary and sufficient (KKT) conditions for global optimality for a new class of possibly nonconvex quadratically constrained quadratic programming (QCQP) problems, denoted by S-QCQP. The class consists of QCQP problems where the matrices of the quadratic components are formed by a scalar times the identity matrix.
Our result relies on a generalized version of the S-Lemma, stated in the context of general QCQP problems. 
Moreover, we prove the exactness of the SDP and the SOCP relaxations for S-QCQP. 
\end{abstract}
\maketitle

\section{Introduction}
\label{intro}
The aim of this work is to provide a  characterization for the global minima of a specific class of Quadratically Constrained Quadratic Programming (QCQP) problem.

A generic (QCQP) problem is defined as in \cite{Boyd1}:
\begin{equation}
    \label{QCQP1}
    \tag{QCQP}
    \begin{split}
        &{Minimize\,}_{x\in \mathbb{R}^n} \ \ J(x):=x^TA_Jx+2b_J^Tx+c_J\\
        &s.t.\ \ f_k(x):=x^TA_kx+2b_k^Tx+c_k \le 0,\ \ k=1,...,m
    \end{split}
\end{equation}
with $x\in\mathbb{R}^{n}$, $A_J,A_k \in S^{n}$,where $S^n$ denotes the space of symmetric $n \times n$ matrices, $b_J,b_k \in \mathbb{R}^n $ and $c_J,c_k \in \mathbb{R}$ are given data for $k=1,...,m$.

We focus on \eqref{QCQP1} problems which satisfy the following assumption:
\begin{assumption}
\label{assumaI}
 The matrices $A_J,A_{k}$ $\forall\,k\in\{1,...,m\}$ are of the form $A_J=a_JI,A_k=a_kI$ where $I$ is the $n\times n$ identity matrix, and $a_J,a_k\in\mathbb{R}$. 
\end{assumption}

When \autoref{assumaI} holds,  \eqref{QCQP1} takes the form
\begin{equation}
    \label{QCQP2}
    \tag{S-QCQP}
    \begin{split}
        &{Minimize\,}_{x\in \mathbb{R}^n} \ \ J(x):=a_Jx^Tx+2b_J^Tx+c_J\\
        &s.t.\ \ f_k(x):=a_kx^Tx+2b_k^Tx+c_k \le 0,\ \ k=1,...,m
    \end{split}
\end{equation}

Since in \eqref{QCQP2} the matrices $A_J,A_k$, $k=1,...,m$ do not appear and we only have the scalars $a_J,a_k$, $k=1,...,m$, we propose to denote \eqref{QCQP2} as \textit{scalar QCQP}, i.e. S-QCQP.

\begin{remark}
    The scalars $a_J,a_k$, $k=1,...,m$ can be also equal to zero for \eqref{QCQP2}.
    When $a_J=0$ or $a_k=0$ for some $k$, the corresponding $J(x)$ or $f_k(x)$ are linear forms. 
\qed
\end{remark}

Consider the following KKT conditions:
\begin{equation}
    \label{kkt0}
    \tag{KKT}
    \begin{split}
        &(i)\ \ \ \nabla(J+\sum\limits_{k=1}^m\gamma_k f_k)(x^*)=0\\
        &(ii)\ \ \gamma_kf_k(x^*)=0\ \ k\in\{1,...,m\}\\
        &(iii)\ A_J+\sum\limits_{k=1}^m\gamma_kA_k\succeq0
    \end{split}
\end{equation}

The main result of this work is to prove that \eqref{kkt0} are necessary and sufficient optimality conditions for \eqref{QCQP2}, if $m+1< n$, there exists $x_0$ such $f_k(x_0)<0\,\forall\,k\in\{1,...,m\}$ and if the following assumption holds.

\begin{assumption}
\label{preliminaryassumpt}
Assume that there exists a global minimum $x^*$ of problem \eqref{QCQP1} and there exists $\gamma\in\mathbb{R}^m_+\backslash\textbf{0}_m$ such that $A_J+\sum\limits\gamma_kA_k\succeq0$.
\end{assumption}

The assumption $\exists\,\gamma\in\mathbb{R}^m_+\backslash\textbf{0}_m$ such that $A_J+\sum\limits\gamma_kA_k\succeq0$, is a standard assumption in the literature related to QCQP, but it is non verified by nonconvex QP (quadratic problems with linear constraints only), since $A_J\nsucceq0$. 
KKT conditions and the SDP relaxation for QP are studied for example in \cite{gon1}, \cite{burer2}.

The assumption that there exists a global minimum $x^*$ of problem \eqref{QCQP2}, allows us to consider a heterogeneous set of cases, like the ones of following examples.
\begin{itemize}
    \item \eqref{QCQP2} with nonconvex quadratic and linear constraints, when the Hessian of the objective function $J$ is positive definite.
    \item \eqref{QCQP2} where the objective function has a negative definite Hessian and convex quadratic and linear constraints.
\end{itemize}

The KKT conditions for \eqref{QCQP1} with one convex quadratic constraint are studied for example in \cite{lucidi1}, \cite{sorensen1}.
In paper \cite{jeya1}, the authors prove that \eqref{kkt0} are necessary and sufficient for \eqref{QCQP1} such that $\forall\,k\in\{J,1,...,m\}$ the matrices $H_k:=\begin{pmatrix}
A_k&b_k\\b_k^T&c_k
\end{pmatrix}$ are $Z$-matrices (which are matrices with non positive off diagonal elements), if there exists $x_0$ such that $f_k(x_0)<0\,\forall\,k\in\{1,...,m\}$.

In order to characterize the optimal value of Z-matrices QCQP, in \cite{jeya1} the authors propose a generalized version of the S-Lemma, in the sense described in Section 2-Preliminaries.

We apply the approach proposed in \cite{jeya1} for a specific subclass of \eqref{QCQP1} to prove that the conditions \eqref{kkt0} are necessary and sufficient optimality conditions for \eqref{QCQP2}, . 

The main ingredients used in deriving the result of the present paper are a generalized version of the S-Lemma and the convexity of the set 
$$
\Omega_0:=\{(f_0(x),f_1(x),...,f_m(x))|x\in\mathbb{R}^n\}+int\mathbb{R}^{m+1}_+\text{ with }f_0:=J(x)-J(x^*).
$$

Then we prove the legitimacy of this approach in the context of \eqref{QCQP2}.

The paper \cite{polik1} provides a comprehensive survey on the famous S-Lemma, while \cite{jeya2} explores the relations between the S-Lemma and the Lagrangian multipliers of \eqref{QCQP1}.

The S-Lemma was the first important results related to the more general S-procedure described in \cite{fradkov1}.
These important theorems are treated from an historical point view in \cite{gusev1}.
Under data uncertainty, \cite{barro1} applies a S-Lemma based approach similar to the one proposed by \cite{jeya1} and us.
The paper \cite{RZ1} extend the results of \cite{jeya1} to Z-matrices \eqref{QCQP1} infinite number of inequalities.

We will exploit \eqref{kkt0} to prove that the convex relaxations (SDP) and (SOCP) of \eqref{QCQP2} are exact. 
We say that a relaxation is exact when the objective values at a global minimum of the original problem and of the relaxation coincide.

Hence, in order to find the optimal value of \eqref{QCQP2} problems, we can use solvers capable to deal with (SOCP) and (SDP) programs, such as the ones described in \cite{Doma1}, \cite{Sturm1},\cite{Kim1}. 

Even if the (SDP) is not exact for a \eqref{QCQP1}, it still provides a lower bound for the optimal value of the \eqref{QCQP1}, \cite{Anstrei1}. 

Many papers and books are dedicated to the convex relaxations of \eqref{QCQP1} problem and, in particular, the semi-definite programming (SDP) relaxation and its modifications
(see e.g. \cite{Vanden1}, \cite{blek1}, \cite{loc0}, \cite{burer1}).

The algorithms proposed in literature often solves exactly the RLT-SDP relaxation of a QCQP, which improves the approximation of the solution given by the SDP relaxation, \cite{Anstrei1}, \cite{bao1}.
The papers of \cite{Luo2} and \cite{ello1} propose algorithms which provides a tighter lower bound to \eqref{QCQP1} then the SDP relation. 
The algorithm proposed by \cite{ello1}, can deal also with a MIQCP (mixed-integer quadratically constrained program). The MIQCP can be defined by adding the following constraints to \eqref{QCQP1}: taking $I:={1,...,n}$ and $J \subset I$, $x_i \in \mathbb{N}\ \ \forall\, i \in J$ and $x_i \in \mathbb{R}^n\ \ \forall\, i \in I\backslash J$. 
Paper \cite{furini1} offers an exhaustive survey on solvers which treat different types of QCQP and MIQCP, notwithstanding the elevated obsolescence rate of this kind of works.

In papers \cite{ben1} and \cite{locatelli1}, the authors study the SDP relaxation in the framework of SD QCQP (see section 3.3), i.e. when all the matrices $A_i$, $i=\{0,...,m\}$ which appear in \eqref{QCQP1} are SD (simultaneously diagonalizable), see section 6. 
In this framework, the SDP relaxation is equivalent to other convex relaxations as the Second Order Cone Programming (SOCP) relaxation, see \cite{Ali1}, \cite{Kim1}, \cite{Lobo1}, \cite{locatelli2}.

The organization of the paper is as follows.
In Section 2, we provide the preliminaries concerning the notation, the S-Lemma, the Fermat rule and the convex separation theorem which are used in the sequel.

The main result of Section 3 is \autoref{globalminchar}, which provides the global minima characterization for general \eqref{QCQP1} problems in the form of \eqref{kkt} conditions.

In Section 4 we apply \autoref{globalminchar} to provide a characterization of the global solution for \eqref{QCQP2}.
The main result is \autoref{omega0isconvex} which proves the convexity of the set $\Omega_0$.

In Section 5 we discuss the exactness of the SDP and the SOCP relaxation for \eqref{QCQP2}, with the help of the equivalent convex relaxation provided by \cite{ben1}.

In Section 6, we make a comparison with the result provided in Section 4 and the results on global minima characterization which can be found in literature for \eqref{QCQP1} when the number of constraints $m=2$. Finally, we solve the (KKT) condition for \eqref{QCQP2} with $m=2$ and $n >> m$.

\section{Preliminaries}

Given a vector $x\in\mathbb{R}^n$, we say $x\ge0$ when, $(\forall\,i\in\{1,...,m\})$, $x_{i}\ge0$, $\mathbb{R}^n_{+}:=\{x\in\mathbb{R}^{n}\ |\ x\ge 0\}$. 
$\mathbf{0}_n$ denotes the all-zero vector in $\mathbb{R}^n$.
Given two vectors $x,y\in\mathbb{R}^n$, $\langle x,y\rangle:=\sum\limits_{i=1}^nx_iy_i$ denotes the inner product between $x$ and $y$ and $\langle x,y\rangle=x^Ty=y^Tx$.
The corresponding norm is the Euclidean norm denoted by $\|\cdot\|=\sqrt{\langle\cdot,\cdot\rangle}$.

Let $S^n$ be the set of symmetric matrices in $\mathbb{R}^{n\times n}$.
$S^n_+$ and $S^n_{++}$ are the cones of symmetric matrices which are also positive semidefinite and positive definite, respectively. 
If a matrix $A$ belongs to $S^n_+$ then we write $A\succeq0$; if $A\in S^n_{++}$ then we write $A\succ0$.

The general quadratic functional is of the form
\begin{equation}
    \label{fuctional}
    f_k(x):=x^TA_kx+2b_k^Tx+c_k
\end{equation}
where $A_k\in S^n$, $b_k\in\mathbb{R}^n$ and $c_k\in\mathbb{R}$; $k=1,...,m$ are the indices which refer to the constraints of \eqref{QCQP1}.

$diag(A)\in\mathbb{R}^n$ is the vector of the elements in the main diagonal of $A\in\ S^n$.
The \textit{trace inner product} $\langle A, B\rangle$,
between  symmetric matrices $A$, $B$  of dimension $n\times n$ is defined as $\langle A, B\rangle:=Tr(B^{T}A)=\sum_{i=1}^{n}\sum_{j=1}^{n}a_{ij}b_{ij}$.
 Let $|\cdot|$ denote the cardinality of a set.
 $C$ is an affine subspace if $C\neq\emptyset$ and $\forall\,\lambda\in\mathbb{R}$ and for all distinct $x,y\in C$
 \begin{equation*}
     \label{affine}
     \lambda x + (1-\lambda) y \in C
 \end{equation*}
 $\textit{aff}\, C$ denotes the affine hull of $C$, i.e. the smallest affine subspace of $\mathbb{R}^n$ containing $C$.
 $B:=\{x\,|\,\|x\|\le1\}$ is the \textit{Euclidean unit ball} in $\mathbb{R}^n$.
 $int\,C$ denotes the interior of $C$, 
 $$
int\,C:=\{x\in C\,|\,(\exists\,\epsilon>0)\,(x+\epsilon B)\subset C\}
 $$
 $ri\,C$ is the relative interior of $C$, i.e. :
 \begin{equation}
     \label{riC}
     ri\,C:=\{x\in \textit{aff}\,C\,|\,(\exists\,\epsilon>0)\,(x+\epsilon B)\cap(\textit{aff}\,C)\subset C\}
 \end{equation}

An important theorem for the optimality conditions of \eqref{QCQP1} is the S-lemma.
We recall a generalized version of the S-Lemma that can be found in \cite{polik1}.

\begin{theorem} (Yakubovich  S-Lemma)
\label{Slemma}
Let $f,g: \mathbb{R}^{n} \to \mathbb{R}$ be quadratic functionals of the form \eqref{fuctional} and suppose that there exists a point $x_0\in \mathbb{R}$ such that $g(x_0)< 0$. The following statements (i) and (ii) are equivalent.
\begin{equation}
\label{fgminus0}
\tag{i}
    \begin{split}
    &(\nexists\, x \in \mathbb{R}^n)\ \ s.t.\\
    &f(x) < 0 \\ &g(x) \le 0
    \end{split}
\end{equation}
\begin{equation}
    \label{lamb}
    \tag{ii}
    \exists \gamma \ge 0 \,\,\,such\,\,\,that\,\,\, f(x)+\gamma g(x) \ge 0 \,\,\, \forall x \in \mathbb{R}^{n}
\end{equation}
\end{theorem}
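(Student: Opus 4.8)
The implication (ii) $\Rightarrow$ (i) I would dispose of first, since it is immediate: if $\gamma\ge 0$ and $f+\gamma g\ge 0$ on all of $\mathbb{R}^n$, then any $x$ with $g(x)\le 0$ satisfies $f(x)\ge-\gamma g(x)\ge 0$, so no $x$ can have both $f(x)<0$ and $g(x)\le 0$.

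For the converse the plan is a homogenization-plus-separation argument carried out in the plane $\mathbb{R}^2$. Writing $f(x)=x^{T}A_{f}x+2b_{f}^{T}x+c_{f}$ and likewise $g(x)=x^{T}A_{g}x+2b_{g}^{T}x+c_{g}$, I would homogenize by setting, for $(x,t)\in\mathbb{R}^{n+1}$,
$$
\tilde f(x,t):=x^{T}A_{f}x+2t\,b_{f}^{T}x+c_{f}t^{2},\qquad \tilde g(x,t):=x^{T}A_{g}x+2t\,b_{g}^{T}x+c_{g}t^{2},
$$
so that $\tilde f(\cdot,1)=f$, $\tilde g(\cdot,1)=g$, and $\tilde f(x,t)=t^{2}f(x/t)$, $\tilde g(x,t)=t^{2}g(x/t)$ whenever $t\ne 0$. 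Put $K:=\{(\tilde f(x,t),\tilde g(x,t)):(x,t)\in\mathbb{R}^{n+1}\}$. The first key input is Dines' classical theorem that the joint range of two homogeneous quadratic forms is convex; since $K$ is visibly a cone, this makes $K$ a convex cone.

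The second key input is that (i), together with the Slater point $x_0$ with $g(x_0)<0$, forces $K$ to be disjoint from $D:=\{(u,v)\in\mathbb{R}^2:u<0,\ v\le 0\}$. For a point of $K$ with $t\ne 0$ this is just (i) applied to $x/t$, since multiplying by $t^{2}>0$ changes no sign. For a point of $K$ with $t=0$, namely $(x^{T}A_{f}x,x^{T}A_{g}x)$ with $x^{T}A_{f}x<0$ and $x^{T}A_{g}x\le 0$, I would travel along the line $s\mapsto x_0+sx$: there $f(x_0+sx)\to-\infty$ as $|s|\to\infty$ because $x^{T}A_{f}x<0$, while $g(x_0+sx)$ is a concave (possibly affine) function of $s$ taking the negative value $g(x_0)$ at $s=0$; inspecting the three subcases $x^{T}A_{g}x<0$, $x^{T}A_{g}x=0$ with nonzero linear part, and $x^{T}A_{g}x=0$ with zero linear part, one finds an unbounded range of $s$ on which simultaneously $f(x_0+sx)<0$ and $g(x_0+sx)\le 0$, contradicting (i). Hence $K\cap D=\emptyset$.

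Finally, $K$ is convex, $D$ is convex with nonempty interior, and $K\cap D=\emptyset$, so the convex separation theorem recalled in Section 2 yields $(\alpha,\beta)\in\mathbb{R}^2\setminus\{(0,0)\}$ and $\delta\in\mathbb{R}$ with $\alpha u+\beta v\ge\delta$ on $K$ and $\alpha u+\beta v\le\delta$ on $D$. Since $(0,0)\in K$ forces $\delta\le 0$ and $D$ accumulates at the origin forces $\delta\ge 0$, we get $\delta=0$; letting $u\to-\infty$ along $v=0$ in $D$ gives $\alpha\ge 0$, and letting $v\to-\infty$ gives $\beta\ge 0$. Evaluating $\alpha u+\beta v\ge 0$ at the points of $K$ with $t=1$ gives $\alpha f(x)+\beta g(x)\ge 0$ for every $x\in\mathbb{R}^n$. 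If $\alpha>0$ I divide by $\alpha$ and set $\gamma:=\beta/\alpha\ge 0$, obtaining (ii); if $\alpha=0$ then $\beta>0$ and $g(x)\ge 0$ for all $x$, contradicting $g(x_0)<0$, so this case does not occur. The routine parts here are the easy implication and the final separation bookkeeping; the main obstacle is the convexity of $K$ (i.e. Dines' theorem) together with the careful exclusion, via the Slater point, of the degenerate ``$t=0$'' recession directions — if one prefers not to reprove Dines' theorem, the needed planar convexity can simply be quoted from the survey \cite{polik1}.
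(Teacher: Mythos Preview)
The paper does not actually prove Theorem~\ref{Slemma}: it is recalled in the Preliminaries as a classical result and attributed to \cite{polik1}, with no proof given. So there is nothing in the paper to compare your argument against.

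That said, your argument is the standard and correct proof of the Yakubovich S-Lemma. The easy direction (ii)$\Rightarrow$(i) is handled cleanly. For (i)$\Rightarrow$(ii) you homogenize to quadratic forms in $(x,t)\in\mathbb{R}^{n+1}$, invoke Dines' theorem to get convexity of the joint range $K$, use the Slater point $x_0$ to rule out the recession directions $t=0$ (your three subcases are correct; in each one the parabola $s\mapsto f(x_0+sx)$ tends to $-\infty$ on an unbounded $s$-interval where $g(x_0+sx)\le 0$), and then separate $K$ from the convex set $D$. One minor technical remark: when you appeal to ``the convex separation theorem recalled in Section~2'', note that $K$ need not be full-dimensional, so strictly speaking you want the version with relative interiors (Theorem~\ref{convexsept} from \cite{rocka}) rather than Theorem~\ref{convexsepth}; since $\mathrm{ri}\,D=\mathrm{int}\,D\subset D$ and $K\cap D=\emptyset$, the hypothesis $\mathrm{ri}\,K\cap\mathrm{ri}\,D=\emptyset$ is indeed satisfied and the rest of your bookkeeping ($\delta=0$, $\alpha,\beta\ge 0$, $\alpha>0$ from the Slater point) goes through unchanged.
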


\begin{definition}
\label{genSlemma}
Consider a collection of quadratic functionals $f_k: \mathbb{R}^{n} \to \mathbb{R}$ $(k=1,...,m)$ of the form \eqref{fuctional}.
A theorem of the alternative is called generalized version of the S-Lemma if it establishes under which assumptions on the functionals $f_k$ only one between the following statements holds:
\begin{enumerate}
    \item $\exists\,x\in\mathbb{R}^n$ such that $ f_k(x)<0\ \ \forall\,k\in\{1,...,m\}$
    \item $(\exists\,\gamma\in\mathbb{R}_+^m\backslash\textbf{0}_m)$ $\sum\limits_{k=1}^m\gamma_kf_k(x)\ge0$ $\forall\,x\in\mathbb{R}^n$
\end{enumerate}
\end{definition}

Another important result is as follows.
\begin{theorem}
\label{fermat}
(Fermat necessary optimality conditions)
Assume that $f:\mathbb{R}^n\rightarrow\mathbb{R}$ is continuously differentiable on an open set $D\subset\mathbb{R}^n$. Then,
\begin{itemize}
    \item if $x^*\in D$ is a local minimizer of $f$, then it must verify $\nabla f(x^*)=0$.
    \item if $f$ is twice continuously differentiable, then we also have $\nabla^2f(x^*)$ positive semidefinite.
\end{itemize}
\end{theorem}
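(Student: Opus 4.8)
The plan is to argue by contradiction in both parts, using Taylor expansion around $x^*$ together with the fact that $D$ is open, so that $x^*$ is an interior point and the perturbed points $x^* + td$ stay in $D$ for $|t|$ sufficiently small. This openness is what allows two-sided perturbations rather than merely one-sided inequalities.

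For the first claim, I would suppose $\nabla f(x^*) \neq 0$ and set $d := -\nabla f(x^*)$. Since $f$ is continuously differentiable at $x^*$, the first-order expansion gives $f(x^* + td) = f(x^*) + t\langle \nabla f(x^*), d\rangle + o(t) = f(x^*) - t\|\nabla f(x^*)\|^2 + o(t)$ as $t \to 0^+$. For $t>0$ small enough the $o(t)$ term is dominated by $t\|\nabla f(x^*)\|^2 > 0$, hence $f(x^* + td) < f(x^*)$, contradicting the assumption that $x^*$ is a local minimizer; therefore $\nabla f(x^*) = 0$. Alternatively one can avoid contradiction by observing that for every direction $d$ the scalar function $t \mapsto f(x^* + td)$ has an interior local minimum at $t=0$, so its derivative $\langle \nabla f(x^*), d\rangle$ vanishes for all $d$, forcing $\nabla f(x^*) = 0$.

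For the second claim, assuming in addition that $f$ is twice continuously differentiable, I would suppose for contradiction that $\nabla^2 f(x^*)$ is not positive semidefinite, i.e. there is $d \in \mathbb{R}^n$ with $\langle d, \nabla^2 f(x^*) d\rangle < 0$. Using $\nabla f(x^*) = 0$ from the first part together with the second-order Taylor expansion, $f(x^* + td) = f(x^*) + \tfrac{t^2}{2}\langle d, \nabla^2 f(x^*) d\rangle + o(t^2)$ as $t \to 0$. Again, for $|t|$ small the negative quadratic term dominates the remainder, so $f(x^* + td) < f(x^*)$, contradicting local minimality; hence $\nabla^2 f(x^*) \succeq 0$.

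There is no genuine obstacle here, as the statement is classical; the only points requiring care are that the openness of $D$ legitimizes the two-sided perturbations, and that the Peano forms of the first- and second-order Taylor remainders are available precisely because $f$ is $C^1$, respectively $C^2$, on $D$.
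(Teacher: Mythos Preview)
Your argument is correct and is the standard textbook proof of the first- and second-order necessary conditions. Note, however, that the paper does not supply its own proof of this theorem: it is stated in the Preliminaries section as a classical result and used later without justification. So there is nothing to compare against; your proposal simply fills in a proof the authors chose to omit.
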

Moreover, in the convex case \autoref{fermat} can be rewritten in the following form.
\begin{theorem}
\label{fermatconv}
(Fermat necessary and sufficient optimality conditions)
Assume that $f:\mathbb{R}^n\rightarrow\mathbb{R}$ is convex and continuously differentiable on an open set $D\subset\mathbb{R}^n$. Then, $x^*\in D$ is a global minimizer of $f$ if and only if
$$
\nabla f(x^*)=0.
$$
\end{theorem}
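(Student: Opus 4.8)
The plan is to prove the two implications separately, leaning on \autoref{fermat} for necessity and on the first-order (gradient) characterization of convexity for sufficiency. For the necessity direction, suppose $x^*\in D$ is a global minimizer of $f$ over $D$. Since $D$ is open, $x^*$ is in particular a local minimizer, so \autoref{fermat} applies verbatim and yields $\nabla f(x^*)=0$; convexity plays no role here.

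For the sufficiency direction, assume $\nabla f(x^*)=0$, and the goal is to show $f(y)\ge f(x^*)$ for every $y\in D$. I would first establish the gradient inequality $f(y)\ge f(x^*)+\langle\nabla f(x^*),y-x^*\rangle$. Fixing $y\in D$, convexity of $D$ guarantees that the segment $[x^*,y]$ stays in $D$, so for $\lambda\in(0,1)$ the convexity of $f$ gives
$$
f\bigl(x^*+\lambda(y-x^*)\bigr)\le(1-\lambda)f(x^*)+\lambda f(y),
$$
which rearranges to
$$
\frac{f\bigl(x^*+\lambda(y-x^*)\bigr)-f(x^*)}{\lambda}\le f(y)-f(x^*).
$$
Letting $\lambda\downarrow 0$, the left-hand side converges to the directional derivative $\langle\nabla f(x^*),y-x^*\rangle$ because $f$ is continuously differentiable near $x^*$; this yields the gradient inequality. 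Substituting $\nabla f(x^*)=0$ gives $f(y)\ge f(x^*)$ for all $y\in D$, i.e. $x^*$ is a global minimizer, completing the equivalence.

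The argument is essentially routine, and the only point that deserves a little care is the passage to the limit $\lambda\downarrow 0$ together with the (implicitly assumed) convexity of the domain $D$, which is needed so that $x^*+\lambda(y-x^*)\in D$ for small $\lambda$ and is in any case built into the notion of a convex function defined on $D$. No separation theorem, S-Lemma, or semidefiniteness condition is required for this statement.
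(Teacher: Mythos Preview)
The paper does not supply its own proof of \autoref{fermatconv}; it is stated in the Preliminaries as a standard background result and then invoked (in the sufficiency part of the proof of \autoref{globalminchar}) without further justification. So there is nothing in the paper to compare your argument against.

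Your proof is correct and entirely standard: Fermat's rule (\autoref{fermat}) for necessity, and the first-order gradient inequality for convex differentiable functions for sufficiency. One small comment: you spend some effort worrying about whether $D$ is convex so that the segment $[x^*,y]$ stays inside $D$. In the statement as written, $f:\mathbb{R}^n\to\mathbb{R}$ is convex on all of $\mathbb{R}^n$ (the open set $D$ only enters as the region on which $f$ is assumed continuously differentiable, so that $\nabla f(x^*)$ makes sense). Hence the gradient inequality $f(y)\ge f(x^*)+\langle\nabla f(x^*),y-x^*\rangle$ holds for every $y\in\mathbb{R}^n$, and the conclusion is that $x^*$ is a global minimizer over $\mathbb{R}^n$, not merely over $D$. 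This is also how the paper uses the result: in \eqref{lagmin} the inequality is asserted for all $x\in\mathbb{R}^n$. So your caveat about the convexity of $D$ is unnecessary, and with that simplification the argument is clean.
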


Let $C_1$ and $C_2$ be non-empty sets in $\mathbb{R}^n$.

\begin{definition}
\label{hypersep}
(\cite{rocka}, section 11)
\begin{itemize}
    \item A hyperplane $H$ is said to \textit{separate} $C_1$ and $C_2$ if $C_1$ is contained in one of the closed half spaces associated to $H$ and $C_2$ is contained in the opposite closed half space.
\item $H$ is said to \textit{separate properly} $C_1$ and $C_2$ if they are not both contained in $H$ itself.
\end{itemize}
\end{definition}

We are ready to state the convex separation theorem that will be crucial in the next section.

\begin{theorem}
\label{convexsept}
(\cite{rocka}, Theorem 11.3)

Let $C_1$ and $C_2$ be non-empty convex sets in $\mathbb{R}^n$. 
In order that there exists a hyperplane that separates $C_1$ and $C_2$ properly, it is necessary and sufficient that $ri\,C_1$ and $ri\,C_2$ have no point in common. 
\end{theorem}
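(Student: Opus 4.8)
The plan is to reduce the two-set statement to the proper separation of a single convex set from the origin, and then to run a case analysis driven by the relative interior. First I would set $D:=C_1-C_2$, which is convex and, since nonempty convex sets in $\mathbb{R}^n$ have nonempty relative interior, so does $D$. Using the relative-interior calculus $ri\,D=ri\,C_1-ri\,C_2$, the hypothesis $ri\,C_1\cap ri\,C_2=\emptyset$ becomes exactly $0\notin ri\,D$ (and conversely). Next I would observe the translation dictionary: a vector $a\neq 0$ with $\sup_{C_1}\langle a,\cdot\rangle\le\inf_{C_2}\langle a,\cdot\rangle$ corresponds precisely to $\langle a,x\rangle\le 0$ for all $x\in D$, and the induced separation of $C_1,C_2$ is \emph{proper} (the two sets are not both contained in the hyperplane) if and only if $\langle a,x\rangle<0$ for at least one $x\in D$. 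So it suffices to show that such an $a$ exists if and only if $0\notin ri\,D$.

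For \textbf{necessity}, suppose such an $a$ (and a separating value $\beta$) exists, with, say, $C_1\subseteq\{\langle a,\cdot\rangle\le\beta\}$ and $C_2\subseteq\{\langle a,\cdot\rangle\ge\beta\}$, and suppose for contradiction that some $z\in ri\,C_1\cap ri\,C_2$. The two inequalities force $\langle a,z\rangle=\beta$, so $z$ maximizes the affine function $\langle a,\cdot\rangle$ over $C_1$; since $z\in ri\,C_1$, a one-line argument (push slightly past $z$ away from any point of strictly smaller value, staying in $C_1$ by relative openness) shows $\langle a,\cdot\rangle$ is constant on $C_1$, hence $C_1$ lies in the hyperplane, and symmetrically so does $C_2$ — contradicting properness.

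For \textbf{sufficiency}, assume $0\notin ri\,D$ and split into cases. If $0\notin\overline D$, project $0$ onto the closed convex set $\overline D$ to get strict separation, with $\sup_D\langle a,\cdot\rangle<0$. Otherwise $0$ lies on the relative boundary of $D$; put $L:=\mathrm{aff}\,D$. If $0\notin L$, the projection $q$ of $0$ onto the affine set $L$ is nonzero and satisfies $\langle q,x\rangle=\|q\|^2>0$ for every $x\in L\supseteq D$, so $a:=-q$ works strictly on all of $D$. If $0\in L$, then $L$ is a linear subspace, $D$ has nonempty interior relative to $L$, and $0$ is a boundary point of $D$ within $L$; the supporting-hyperplane theorem applied inside $L$ produces $0\neq a_0\in L$ with $\langle a_0,x\rangle\le 0=\langle a_0,0\rangle$ for all $x\in D$, and $\langle a_0,\cdot\rangle$ cannot vanish identically on $D$ — otherwise it would vanish on $\mathrm{aff}\,D=L$, contradicting $\langle a_0,a_0\rangle>0$ with $a_0\in L$ — so $a:=a_0$ is as required. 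Translating the resulting hyperplane back to $C_1,C_2$ finishes the proof.

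The step I expect to be the main obstacle is the relative-interior bookkeeping: cleanly justifying $ri(C_1-C_2)=ri\,C_1-ri\,C_2$, and, in the sufficiency direction, organizing the case split (closure versus relative boundary, and whether $\mathrm{aff}\,D$ passes through the origin) so that the underlying separating-hyperplane/projection argument is invoked in the correct ambient subspace. The remaining pieces — the projection theorem onto a closed convex set, the supporting-hyperplane theorem, and the lemma that an affine function constant on a convex set is constant on its affine hull — are standard, and the translation back to the original two sets is routine.
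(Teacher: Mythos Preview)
The paper does not supply its own proof of this statement; it simply cites Theorem~11.3 of Rockafellar and uses it as a black box. Your outline is correct and is, in substance, the standard argument found there: pass to $D=C_1-C_2$, use the relative-interior calculus $ri(C_1-C_2)=ri\,C_1-ri\,C_2$ to translate the hypothesis into $0\notin ri\,D$, handle necessity via the ``a linear functional attaining its extremum at a relative interior point is constant'' lemma, and handle sufficiency by a projection/supporting-hyperplane dichotomy carried out inside $\mathrm{aff}\,D$.

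One small organizational remark: your sub-case ``$0$ lies on the relative boundary of $D$ but $0\notin L=\mathrm{aff}\,D$'' is vacuous, since affine subspaces are closed and hence $\overline{D}\subseteq L$; thus $0\in\overline{D}$ already forces $0\in L$. You can delete that branch and go directly to the supporting-hyperplane argument inside the linear subspace $L$.
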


For $n$-dimensional convex sets in $\mathbb{R}^n$, $\textit{aff}\,C=\mathbb{R}^{n}$  and so, by \eqref{riC}, we have $ri\,C=int\,C$ (\cite{rocka}, section 6).

Hence, we can rewrite \autoref{convexsepth} as follows.
\begin{theorem}
\label{convexsepth}

Let $C_1$ and $C_2$ be full dimensional non-empty convex sets in $\mathbb{R}^n$. 
In order that there exists a hyperplane that separates $C_1$ and $C_2$ properly, it is necessary and sufficient that $int\,C_1$ and $int\,C_2$ have no point in common. 
\end{theorem}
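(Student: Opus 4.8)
The plan is to obtain this statement as an immediate specialization of \autoref{convexsept} (Rockafellar, Theorem 11.3), the only work being to identify the relative interiors with the interiors under the full-dimensionality hypothesis. So I would not re-prove the separation theorem itself; I would invoke it verbatim and then translate ``$ri$'' into ``$int$''.

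First I would record the elementary fact that if $C\subset\mathbb{R}^n$ is a full-dimensional (i.e.\ $n$-dimensional) convex set, then $\textit{aff}\,C=\mathbb{R}^n$. Indeed, $\textit{aff}\,C$ is an affine subspace containing the $n+1$ affinely independent points witnessing full dimensionality, so it has dimension $n$ and must coincide with $\mathbb{R}^n$. Substituting $\textit{aff}\,C=\mathbb{R}^n$ into the definition \eqref{riC} of the relative interior, the condition $(x+\epsilon B)\cap(\textit{aff}\,C)\subset C$ becomes $(x+\epsilon B)\subset C$, which is exactly the defining condition of $int\,C$; hence $ri\,C=int\,C$. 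I would apply this to both $C_1$ and $C_2$, each of which is assumed nonempty, convex and full-dimensional.

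Next I would simply invoke \autoref{convexsept} for the pair $C_1$, $C_2$: there exists a hyperplane separating $C_1$ and $C_2$ properly if and only if $ri\,C_1\cap ri\,C_2=\emptyset$. Replacing $ri\,C_1$ by $int\,C_1$ and $ri\,C_2$ by $int\,C_2$ using the previous paragraph yields the claimed equivalence, namely that proper separation exists if and only if $int\,C_1$ and $int\,C_2$ have no point in common. This completes the argument.

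There is essentially no genuine obstacle here; the statement is a corollary, and the ``hard part'' is only the bookkeeping already carried out in the remark preceding the theorem. The one place to be slightly careful is the passage $\textit{aff}\,C=\mathbb{R}^n\Rightarrow ri\,C=int\,C$: one should note that $int\,C\neq\emptyset$ (again from full dimensionality) so that the set $ri\,C$ is nonempty and the identification is not vacuous, but this is routine and can be dispatched in a line.
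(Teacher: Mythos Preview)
Your proposal is correct and matches the paper's approach exactly: the paper also derives this as an immediate corollary of \autoref{convexsept} by noting (in the sentence preceding the theorem) that for $n$-dimensional convex sets $\textit{aff}\,C=\mathbb{R}^n$ and hence $ri\,C=int\,C$. There is no additional argument in the paper beyond this observation.
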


\section{Global Minima Characterization for general (QCQP)}
\label{global:characterisation}

In this section we characterize global minima of  \eqref{QCQP1} problem by  \eqref{kkt} conditions derived with the help of a generalized form of the S-Lemma as defined in \autoref{genSlemma}.

Our approach is inspired by the one proposed in \cite{jeya1} to characterize the global minima of $Z$-matrices \eqref{QCQP1}, i.e. \eqref{QCQP1} with the matrices
\begin{equation}
\label{matrix:H}
H_k:=\begin{pmatrix}A_k&b_k\\b_k^T&c_k\end{pmatrix}\ \ k=1,...,m\text{ and }H_J:=\begin{pmatrix}A_J&b_k\\b_J^T&c_J\end{pmatrix}
\end{equation}
having all the off diagonal elements non positive.

In contrast to \cite{jeya1}, we are not in the framework of $Z$-matrices \eqref{QCQP1}, but we consider the general case of \eqref{QCQP1} problems.

In the sequel, we take into account the following additional assumption.

\begin{assumption}
\label{omegaconv}
Consider a collection of quadratic functionals $f_k(x)=x^TA_kx+b_k^Tx+c_k$ $k=0,...,m$. 
The set $\Omega_0$, with 
\begin{equation}
    \label{omega}
    \Omega_0:=\{(f_0(x),f_1(x),...,f_m(x))\ |\ x\in\mathbb{R}^n\}+int\mathbb{R}^{m+1}_+
\end{equation}
 is convex.
\end{assumption}

We prove a generalized version of the S-Lemma, in the form of a theorem of the alternatives.

\renewcommand{\theenumi}{\roman{enumi}}%
\begin{theorem}
\label{opSlemma}

If \autoref{omegaconv} holds, then
exactly one of the following statements is valid:
\begin{enumerate}
    \item $\exists\,x\in\mathbb{R}^n$ such that $ f_k(x)<0\ \ \forall\,k\in\{0,...,m\}$
    \item $(\exists\,\gamma\in\mathbb{R}_+^{m+1}\backslash\textbf{0}_{m+1})$ $\sum\limits_{k=0}^m\gamma_kf_k(x)\ge0$ $\forall\,x\in\mathbb{R}^n$
\end{enumerate}
\end{theorem}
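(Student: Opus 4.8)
The plan is to establish the dichotomy in two steps: that (i) and (ii) are mutually exclusive, and that the negation of (i) forces (ii), the latter being where \autoref{omegaconv} enters. The first step is immediate. If some $x\in\mathbb{R}^n$ satisfied $f_k(x)<0$ for all $k\in\{0,\dots,m\}$ while some $\gamma\in\mathbb{R}_+^{m+1}\setminus\textbf{0}_{m+1}$ satisfied $\sum_{k=0}^m\gamma_kf_k(y)\ge0$ for all $y\in\mathbb{R}^n$, then evaluating the latter at $y=x$ would give $\sum_{k=0}^m\gamma_kf_k(x)\ge0$, contradicting the fact that $\gamma\ge0$, $\gamma\neq\textbf{0}_{m+1}$ and $f_k(x)<0$ force a strictly negative sum. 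So at most one of (i), (ii) holds.

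For the second step, the observation that drives everything is that $\textbf{0}_{m+1}\in\Omega_0$ if and only if (i) holds: by the definition \eqref{omega}, $\textbf{0}_{m+1}\in\Omega_0$ means $(f_0(x),\dots,f_m(x))=-v$ for some $x\in\mathbb{R}^n$ and some $v\in int\,\mathbb{R}^{m+1}_+$, which is exactly the assertion that $f_k(x)<0$ for all $k$. Hence, assuming (i) fails, we have $\textbf{0}_{m+1}\notin\Omega_0$. Since $\Omega_0$ is nonempty and, by \autoref{omegaconv}, convex, and since $ri\,\{\textbf{0}_{m+1}\}=\{\textbf{0}_{m+1}\}$ is disjoint from $ri\,\Omega_0\subseteq\Omega_0$, \autoref{convexsept} provides a hyperplane separating $\{\textbf{0}_{m+1}\}$ and $\Omega_0$ properly; equivalently, a nonzero $\gamma\in\mathbb{R}^{m+1}$ and a scalar $\alpha$ with, after choosing the orientation of $\gamma$ so that $\Omega_0$ lies on the upper side, $0=\langle\gamma,\textbf{0}_{m+1}\rangle\le\alpha\le\langle\gamma,z\rangle$ for every $z\in\Omega_0$. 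In particular $\langle\gamma,z\rangle\ge0$ on $\Omega_0$.

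It then remains to read off the two properties in (ii) from this $\gamma$. First, $\gamma\ge0$ componentwise: $\Omega_0$ is closed under adding nonnegative vectors (because $int\,\mathbb{R}^{m+1}_++\mathbb{R}^{m+1}_+\subseteq int\,\mathbb{R}^{m+1}_+$), so fixing any $z\in\Omega_0$ and any coordinate $j$ we get $z+te_j\in\Omega_0$, where $e_j$ is the $j$-th unit vector, and hence $\langle\gamma,z\rangle+t\gamma_j\ge0$ for all $t>0$; letting $t\to+\infty$ forces $\gamma_j\ge0$. Together with $\gamma\neq\textbf{0}_{m+1}$ this gives $\gamma\in\mathbb{R}_+^{m+1}\setminus\textbf{0}_{m+1}$. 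Second, for an arbitrary $x\in\mathbb{R}^n$ and arbitrary $\epsilon>0$, writing $\textbf{1}_{m+1}\in\mathbb{R}^{m+1}$ for the all-ones vector, the point $(f_0(x),\dots,f_m(x))+\epsilon\,\textbf{1}_{m+1}$ lies in $\Omega_0$ since $\epsilon\,\textbf{1}_{m+1}\in int\,\mathbb{R}^{m+1}_+$; applying $\langle\gamma,\cdot\rangle\ge0$ gives $\sum_{k=0}^m\gamma_kf_k(x)+\epsilon\sum_{k=0}^m\gamma_k\ge0$, and letting $\epsilon\to0^+$ yields $\sum_{k=0}^m\gamma_kf_k(x)\ge0$. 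Hence (ii) holds, and combined with the first step exactly one of (i), (ii) is valid.

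As for difficulty: once \autoref{omegaconv} is granted the proof is a textbook separation argument, so the only points requiring a little care are the passage from $\textbf{0}_{m+1}\notin\Omega_0$ to disjointness of the relative interiors (harmless, since $\Omega_0\supseteq ri\,\Omega_0$) and the upward-closedness of $\Omega_0$ used to sign $\gamma$. The genuinely substantive work — verifying that $\Omega_0$ is actually convex for the class \eqref{QCQP2} — is precisely what is postponed to the later sections, and is not needed here.
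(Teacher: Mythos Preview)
Your proof is correct and follows the same separation strategy as the paper, with one cosmetic difference: the paper separates $\Omega_0$ from the cone $-\text{int}\,\mathbb{R}_+^{m+1}$ via \autoref{convexsepth} and reads off $\gamma\ge 0$ directly from the inequality $\langle\gamma,y\rangle\le 0$ on that cone, whereas you separate $\Omega_0$ from the singleton $\{\mathbf{0}_{m+1}\}$ via \autoref{convexsept} and obtain $\gamma\ge 0$ from the upward-closedness of $\Omega_0$. Both routes then conclude with the same $\epsilon\to 0^+$ limiting argument to pass from $\langle\gamma,z\rangle\ge 0$ on $\Omega_0$ to $\sum_k\gamma_k f_k(x)\ge 0$ on $\mathbb{R}^n$.
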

\renewcommand{\theenumi}{\arabic{enumi}}%
\begin{proof}
The implication [Not(ii)$\Rightarrow$(i)] is immediate (by contradiction).
To show the implication [Not(i)$\Rightarrow$(ii)], assume that (i) does not hold, i.e., the system 
\begin{equation}
\label{slater1}
f_k(x)<0\ \ \forall\,k\in\{0,...,m\}
\end{equation}
has no solution.
By the definition of $\Omega_0$ in \eqref{omega}, the inconsistency of the system \eqref{slater1} implies that 
\begin{equation} 
\label{separationzero}
\Omega_0\cap\ (-\text{int} \mathbb{R}_+^{m+1})=\emptyset.
\end{equation}
To see this, suppose by contrary,  that there exists $y\in\Omega_{0}\cap (-\text{int}\mathbb{R}_{+}^{m+1})$. By the definition \eqref{omega} of  $\Omega_{0}$ (for functions $f_{k}$, $k=0,...,m$), there exist $x_{0}\in\mathbb{R}^{n}$,  $C_{0}, C_{1}\in\text{int}\mathbb{R}_{+}^{m+1}$ such that
$$
y=(f_{0}(x_{0}),...,f_{m}(x_{0}))+C_{0}=-C_{1}\in
(-\text{int}\mathbb{R}_{+}^{m+1}),
$$
i.e., $(f_{0}(x_{0}),...,f_{m}(x_{0}))=-C_{0}-C_{1}\in(-\text{int}\mathbb{R}_{+}^{m+1})$ contradictory to \eqref{slater1}. This proves \eqref{separationzero}.

 Since $\Omega_0$ and $-\text{int}\mathbb{R}_{+}^{m+1}$ are full dimensional in $\mathbb{R}^{m+1}$, non-empty and convex, by \eqref{separationzero}, 
$$ 
int\,\Omega_0\cap\ \,(-\text{int} \mathbb{R}_+^{m+1})=\emptyset,
$$
 we can apply \autoref{convexsepth}. 
 So there exists a hyperplane which separates $\Omega_0$ and $-\text{int}\mathbb{R}_{+}^{m+1}$ properly, i.e. $\gamma\in\mathbb{R}^m\backslash\textbf{0}_m$ such that
\begin{equation} 
\label{separation}
\sum_{k=0}^{m} \gamma_{k}y_{k}\ge 0\ \ \forall\ y\in\Omega_{0}
\end{equation}
and $\sum_{k=0}^{m} \gamma_{k}y_{k}\le 0\ \ \forall\ y\in(-int \mathbb{R}_+^{m+1}).$ This latter inequality shows that it must be $\gamma\in\mathbb{R}_{+}^{m+1}\backslash\textbf{0}_{m+1}$.

Consequently, by the definition \eqref{omega} of  $\Omega_{0}$, for $k=0,...,m$
$$
y=(f_{0}(x),...,f_{m}(x))+C\in\Omega_{0},\ \ \ C\in\text{int} \mathbb{R}_{+}^{m+1}.
$$
By this, and the formula \eqref{separation}, we get 
\begin{equation} 
\label{separation1}
\sum_{k=0}^{m} \gamma_{k}(f_{k}(x)+C_{k})\ge 0\ \ \forall\ x\in\mathbb{R}^{n}, \ \ C_k\in\text{int}\mathbb{R}_+.
\end{equation}
Consequently, it must be
\begin{equation} 
\label{separation2}
\sum_{k=0}^{m} \gamma_{k}f_{k}(x)\ge 0\ \ \forall\ x\in\mathbb{R}^{n}.
\end{equation}
Otherwise, 
$
\sum_{k=0}^{m} \gamma_{k}f_{k}(\bar{x})< 0\ \ \text{for some  }\ \bar{x}\in\mathbb{R}^{n},$ and it would be possible to choose $C\in\text{int}\mathbb{R}_{+}^{m+1}$ with components $C_{k}>0$ small enough so as 
$$
\sum_{k=0}^{m} \gamma_{k}(f_{k}(\bar{x})+C_{k})< 0
$$
which would contradict \eqref{separation} since $(f_{0}(\bar{x})+k_{0},...,f_{m}(\bar{x})+k_{m})\in\Omega_{0}$.
Thus, \eqref{separation2} holds which proves $(ii)$.
\end{proof}

In the following, we exploit \autoref{opSlemma} to get necessary and sufficient optimality conditions for general \eqref{QCQP1}.

\begin{theorem}
\label{globalminchar}
Let \autoref{preliminaryassumpt} hold and
$x^*$ be a global minimizer of \eqref{QCQP1}.
Define
 $$
 f_0(x):=J(x)-J(x^*)=x^TA_Jx+b_J^Tx-(x^*)^TA_Jx^*-2b_J^Tx^*
 $$ 
Let \autoref{omegaconv} hold, i.e. the set $\Omega_0$ defined in \eqref{omega} is convex.

Then the following Fritz-John conditions are necessary for optimality, i.e.
there exists a vector $(\gamma_0,...,\gamma_m)\in\mathbb{R}^{m+1}_+\backslash\textbf{0}_{m+1}$ such that
\begin{equation}
    \label{fritzjohn}
    \begin{split}
        &(i)\ \ \ \nabla(\gamma_0J+\sum\limits_{k=1}^m\gamma_k f_k)(x^*)=0\\
        &(ii)\ \ \gamma_kf_k(x^*)=0\ \ k\in\{1,...,m\}\\
        &(iii)\ \gamma_{0}A_J+\sum\limits_{k=1}^m\gamma_kA_k\succeq0
    \end{split}
\end{equation}
Moreover, if there exists a point $x_0\in\mathbb{R}^{n}$ such that \begin{equation} \label{slater}
f_k(x_0)<0\ \ \ \forall\,k\in\{1,...,m\}
\end{equation}
then 
there exists a vector $(\gamma_1,...,\gamma_m)\in\mathbb{R}^m_+\backslash\textbf{0}_{m}$ such that
\begin{equation}
    \label{kkt}
    \tag{KKT}
    \begin{split}
        &(i)\ \ \ \nabla(J+\sum\limits_{k=1}^m\gamma_k f_k)(x^*)=0\\
        &(ii)\ \ \gamma_kf_k(x^*)=0\ \ k\in\{1,...,m\}\\
        &(iii)\ A_J+\sum\limits_{k=1}^m\gamma_kA_k\succeq0
    \end{split}
\end{equation}
are necessary for optimality. Moreover, given a feasible $x^{*}$ for problem \eqref{QCQP1} satisfying \eqref{slater}, the conditions \eqref{kkt} are also sufficient for global optimality of $x^*$.
\end{theorem}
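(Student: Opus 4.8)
The plan is to prove the three implications separately: (a) the Fritz--John conditions \eqref{fritzjohn} are necessary; (b) under the Slater condition \eqref{slater}, one can normalize $\gamma_0=1$, obtaining \eqref{kkt} as necessary conditions; and (c) \eqref{kkt} are sufficient for global optimality.

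For part (a): Since $x^*$ is a global minimizer of \eqref{QCQP1}, the system $f_0(x)<0$, $f_k(x)<0$ for $k=1,\dots,m$ has \emph{no} solution --- indeed any such $x$ would be feasible with $J(x)<J(x^*)$. Hence statement (i) of \autoref{opSlemma} fails, and since \autoref{omegaconv} holds we may invoke \autoref{opSlemma} to get $\gamma\in\mathbb{R}^{m+1}_+\backslash\mathbf{0}_{m+1}$ with $\sum_{k=0}^m\gamma_k f_k(x)\ge 0$ for all $x\in\mathbb{R}^n$, where I write $f_0=J-J(x^*)$. This is where I would be slightly careful: strictly I want the alternative with respect to the index set $\{0,1,\dots,m\}$ applied to $(J-J(x^*),f_1,\dots,f_m)$, which is exactly the setup of \autoref{opSlemma}. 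Define $\varphi(x):=\sum_{k=0}^m\gamma_k f_k(x)=\gamma_0(J(x)-J(x^*))+\sum_{k=1}^m\gamma_k f_k(x)$. Then $\varphi$ is a quadratic functional, $\varphi(x)\ge 0$ for all $x$, and --- the key point --- $\varphi(x^*)=\sum_{k=1}^m\gamma_k f_k(x^*)\le 0$ because $x^*$ is feasible ($f_k(x^*)\le 0$) and $\gamma_k\ge 0$. Combining, $\varphi(x^*)=0$, so $x^*$ is a global minimizer of $\varphi$. Now apply \autoref{fermat} to $\varphi$: the gradient condition gives \eqref{fritzjohn}(i), and the Hessian condition $\nabla^2\varphi(x^*)=2(\gamma_0 A_J+\sum_{k=1}^m\gamma_k A_k)\succeq 0$ gives \eqref{fritzjohn}(iii). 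Finally $\varphi(x^*)=0$ together with each term $\gamma_k f_k(x^*)\le 0$ forces $\gamma_k f_k(x^*)=0$ for every $k\in\{1,\dots,m\}$, which is \eqref{fritzjohn}(ii).

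For part (b): Suppose $\gamma_0=0$. Then $\varphi(x)=\sum_{k=1}^m\gamma_k f_k(x)\ge 0$ for all $x$ with $(\gamma_1,\dots,\gamma_m)\ne\mathbf{0}_m$, and evaluating at the Slater point $x_0$ gives $\sum_{k=1}^m\gamma_k f_k(x_0)<0$, a contradiction. Hence $\gamma_0>0$; dividing the vector $(\gamma_0,\dots,\gamma_m)$ by $\gamma_0$ and renaming $\gamma_k/\gamma_0\mapsto\gamma_k$ turns \eqref{fritzjohn} into \eqref{kkt} (note $(\gamma_1,\dots,\gamma_m)$ may still be the zero vector, but the statement as written in \eqref{kkt}(iii) remains valid --- if the paper insists on $(\gamma_1,\dots,\gamma_m)\in\mathbb{R}^m_+\backslash\mathbf{0}_m$, one observes that if all $\gamma_k=0$ then $A_J\succeq 0$ contradicts the non-triviality discussion around \autoref{preliminaryassumpt}, or one argues this case directly).

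For part (c): Assume $x^*$ is feasible and \eqref{kkt} holds. Set $L(x):=J(x)+\sum_{k=1}^m\gamma_k f_k(x)$. By \eqref{kkt}(iii), $\nabla^2 L(x^*)=2(A_J+\sum\gamma_k A_k)\succeq 0$, so $L$ is a convex quadratic. By \eqref{kkt}(i), $\nabla L(x^*)=0$, so by \autoref{fermatconv}, $x^*$ is a global minimizer of $L$, i.e. $L(x)\ge L(x^*)$ for all $x$. Now for any feasible $x$ (so $f_k(x)\le 0$ and $\gamma_k\ge 0$), $J(x)\ge J(x)+\sum_{k=1}^m\gamma_k f_k(x)=L(x)\ge L(x^*)=J(x^*)+\sum_{k=1}^m\gamma_k f_k(x^*)=J(x^*)$, where the last equality uses complementary slackness \eqref{kkt}(ii). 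Hence $J(x)\ge J(x^*)$ for all feasible $x$, which is exactly global optimality of $x^*$.

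The main obstacle is part (a), and specifically making sure \autoref{opSlemma} is applied to the correct family of functionals (with $f_0=J-J(x^*)$) and that \autoref{omegaconv}/\autoref{preliminaryassumpt} are genuinely what licenses that application; everything downstream is a routine combination of complementary slackness with the Fermat rules \autoref{fermat} and \autoref{fermatconv}. A secondary point to handle carefully is the normalization step in part (b) and the degenerate case where the normalized multiplier vector $(\gamma_1,\dots,\gamma_m)$ vanishes, which must be reconciled with the ``$\backslash\mathbf{0}_m$'' claim in the statement of \eqref{kkt}.
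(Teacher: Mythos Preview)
Your proposal is correct and follows essentially the same route as the paper: apply \autoref{opSlemma} to $(f_0,f_1,\dots,f_m)$ to obtain the nonnegative Lagrangian $\varphi$, evaluate at $x^*$ to force $\varphi(x^*)=0$ and hence complementary slackness, invoke \autoref{fermat} for (i) and (iii), rule out $\gamma_0=0$ via the Slater point, and for sufficiency use convexity of the Lagrangian together with \autoref{fermatconv} and (ii). The paper's proof is identical in structure, and it likewise does not address the ``$\backslash\mathbf{0}_m$'' issue you flag in part (b); that caveat is a genuine loose end in the statement rather than in your argument.
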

\begin{proof}
Let $f_0(x):=J(x)-J(x^*)$. Since $x^*$ is a global minimizer of \eqref{QCQP1}, $f_0(x)\ge0$ $\forall\,x$ feasible for \eqref{QCQP1}.
Hence, the system $f_k(x)<0$ $k=0,...,m$ has no solution.
By \autoref{opSlemma}, there exists  $(\gamma_0,...,\gamma_m)\in\mathbb{R}^{m+1}_+\backslash\textbf{0}_{m+1}$ such that 
\begin{equation} \label{alternative}
\Tilde{L}(x):=\gamma_0f_0(x)+\sum\limits_{k=1}^m\gamma_kf_k(x)\ge0\ \ \text{for all }x\in\mathbb{R}^n.
\end{equation}
In particular, for $x=x^*$, we have $\sum\limits_{k=1}^m\gamma_kf_k(x^*)\ge0$. 
Since $\gamma_kf_k(x^*)\le0$ $\forall\,k\in \{1,...,m\}$, it must be $\gamma_kf_k(x^*)=0$ $\forall\,k\in \{1,...,m\}$ which proves (ii) of \eqref{fritzjohn}.
By \eqref{alternative}, for all $x\in\mathbb{R}^n$
\begin{equation}
    \label{attmin}
        \gamma_0J(x)+\sum\limits_{k=1}^m\gamma_kf_k(x)\ge \gamma_0J(x^*)
\end{equation}
Hence $L(x):=\gamma_0J(x)+\sum\limits_{k=1}^m\gamma_kf_k(x)$ attains its minimum over $\mathbb{R}^n$ at $x^*$. 
We can apply \autoref{fermat} for a twice continuously differentiable function $L(x)$. The necessary optimality conditions $\nabla_xL(x^*)=0$ and $\nabla^2_xL(x^*)\succeq0$ are respectively equivalent to 
the conditions (i) and (iii) of \eqref{fritzjohn}, which finishes the proof of the first part.

Suppose now that \eqref{slater} holds, i.e., there exists a point $x_0$ such that $f_k(x_0)<0$ $\forall\,k=1,...,m$. 
If it were $\gamma_0=0$, then by \eqref{alternative}, it would be $\sum\limits_{k=1}^m\gamma_kf_k(x)\ge0$ for all $x\in\mathbb{R}^n$, which would contradict \eqref{slater}. Hence $\gamma_0>0$ and the Fritz-John conditions becomes the KKT condition, i.e. \eqref{kkt} holds.

To complete the proof, we show that conditions \eqref{kkt} are also sufficient for optimality. 
Assume that there exists  $x^*\in\mathbb{R}^{n}$  which is feasible to \eqref{QCQP1} and $(\gamma_1,...,\gamma_m)\in\mathbb{R}^m_+\backslash\textbf{0}_{m}$ such that \eqref{kkt} holds.
The Lagrangian for \eqref{QCQP1} is:
\begin{equation}
\label{lagqcqp}
L(x,\gamma):=J(x)+\sum\limits_{k=1}^m\gamma_kf_k(x)=x^TA(\gamma)x+2b(\gamma)^Tx+c(\gamma)
\end{equation}
with $A(\gamma)=A_J+\sum\limits_{k=1}^m\gamma_kA_k$,  $b(\gamma)=b_J+\sum\limits_{k=1}^m\gamma_kb_k$ and  $c(\gamma)=c_J+\sum\limits_{k=1}^m\gamma_kc_k$.

Notice that the Lagrangian $L(x,\gamma)$ is convex with respect to $x$, since  $A(\gamma)=A_J+\sum\limits_{k=1}^m\gamma_kA_k\succeq0$ by \eqref{kkt}. 

Hence $x^*$ such that $\nabla_x L(x^*)=0$ is the minimum of $L(x,\gamma)$ for $\gamma$ fixed, by \autoref{fermatconv}.

By \eqref{kkt}, $\gamma_kf_k(x^*)=0$ for $k=1,...,m$. We have
\begin{equation}
\label{lagmin}
J(x)+\sum\limits_{k=1}^m\gamma_kf_k(x)\ge J(x^*)+\sum\limits_{k=1}^m\gamma_kf_k(x^*)=J(x^*)\ \ \forall\,x\in\mathbb{R}^n.
\end{equation}
For any $x$ feasible for \eqref{QCQP1}, $f_k(x)\le0$ and hence 
\begin{equation}
    \label{sowehave}
    J(x)\ge J(x)+\sum\limits_{k=1}^m\gamma_kf_k(x)
\end{equation}
Combining \eqref{lagmin} and \eqref{sowehave}, for any $x$ feasible for \eqref{QCQP1}, we have
\begin{equation}
    \label{xglobmin}
    J(x)\ge J(x^*)
\end{equation}
which proves that $x^*$ is a global minimum for \eqref{QCQP1}.
\end{proof} 

\section{Global minima characterization for (S-QCQP)}
\label{global:characterisation1}

In the present section we use the results of Section \ref{global:characterisation} to provide \eqref{kkt} characterization of global minima for \eqref{QCQP2}. The main result of this section is Theorem \ref{omega0isconvex}.

Let $x^*$ be the global minimum of \eqref{QCQP2}.
As in \eqref{globalminchar}, we use the notation
 $$
 f_0(x):=J(x)-J(x^*)=a_J\|x\|^2+b_J^Tx-a_J\|x^*\|^2-2b_J^Tx^*
 $$ 

In the case of \eqref{QCQP2}, the set $\Omega_0$, defined in \eqref{omega}, take the form
\begin{equation*}
    \Omega_0:=\{(f_0(x),f_1(x),..,f_m(x))|x\in\mathbb{R}^n\}+int\mathbb{R}^{m+1}_+
\end{equation*}
where 
$$
f_k(x):=a_k\|x\|^2+2b_k^Tx+c_k\ \ k=0,...,m
$$
with $a_k\in\mathbb{R}$.

\begin{theorem}
\label{omega0isconvex}
Consider problem \eqref{QCQP2} with $m+1< n$. The set $\Omega_0$ defined in \eqref{omega} is convex.
\end{theorem}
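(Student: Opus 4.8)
The plan is to reduce the claim to convexity of the curved part $\mathcal{F}:=\{(f_0(x),\dots,f_m(x))\mid x\in\mathbb{R}^n\}$ of $\Omega_0$; once that is known, $\Omega_0=\mathcal{F}+int\,\mathbb{R}^{m+1}_{+}$ is convex as a Minkowski sum of two convex sets. Thus it suffices to prove: for all $x_1,x_2\in\mathbb{R}^n$ and all $\lambda\in[0,1]$ there is an $x\in\mathbb{R}^n$ with $f_k(x)=\lambda f_k(x_1)+(1-\lambda)f_k(x_2)$ for every $k\in\{0,\dots,m\}$ simultaneously.

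First I would collect the linear data in the $(m+1)\times n$ matrix $B$ with rows $b_0^T,\dots,b_m^T$, so that $(f_0(x),\dots,f_m(x))=a\,\|x\|^2+2Bx+c$ with $a=(a_0,\dots,a_m)$. Put $\bar{x}:=\lambda x_1+(1-\lambda)x_2$ and $\tau:=\lambda\|x_1\|^2+(1-\lambda)\|x_2\|^2$. Since $B$ acts affinely, $B\bar{x}=\lambda Bx_1+(1-\lambda)Bx_2$; hence any $x^\star\in\bar{x}+\ker B$ already satisfies $b_k^Tx^\star=b_k^T\bar{x}$ for all $k$, and if in addition $\|x^\star\|^2=\tau$ then $f_k(x^\star)=a_k\tau+2b_k^T\bar{x}+c_k=\lambda f_k(x_1)+(1-\lambda)f_k(x_2)$ for all $k$, exactly as required.

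So the crux is to find $x^\star$ in the affine subspace $\bar{x}+\ker B$ whose squared norm is exactly $\tau$, and this is where $m+1<n$ enters: $\dim\ker B\ge n-\mathrm{rank}\,B\ge n-(m+1)\ge1$, so $\bar{x}+\ker B$ contains a line. On it the continuous function $x\mapsto\|x\|^2$ is unbounded above (parametrize the subspace through its minimum-norm point) and takes at $\bar{x}$ the value $\|\bar{x}\|^2\le\tau$, the inequality being convexity of the squared norm; by connectedness of the subspace and the intermediate value theorem it attains the value $\tau$. Equivalently, one can observe that $\{(\|x\|^2,Bx)\mid x\in\mathbb{R}^n\}$ is the epigraph, over the range of $B$, of the convex quadratic $y\mapsto\min\{\|x\|^2\mid Bx=y\}$, hence is convex, and $\mathcal{F}$ is its image under the affine map $(t,y)\mapsto a\,t+2y+c$.

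The obstacle to anticipate is the indefiniteness of the scalars $a_k$: if some $a_k<0$, the naive candidate $\bar{x}$ fails, since $\|\bar{x}\|^2\le\tau$ pushes $f_k(\bar{x})$ in the wrong direction for those $k$; one genuinely needs the slack dimensions of $\ker B$ to tune $\|x\|^2$ to the precise value $\tau$ rather than merely bounding it. The degenerate sub-cases ($B$ rank-deficient, or some $a_k=0$) cause no difficulty, as the argument only uses $\ker B\neq\{0\}$ together with continuity.
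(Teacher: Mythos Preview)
Your proof is correct and follows essentially the same route as the paper: both seek $\tilde{x}\in \bar{x}+\ker B$ with $\|\tilde{x}\|^{2}=\lambda\|x_{1}\|^{2}+(1-\lambda)\|x_{2}\|^{2}$, using $m+1<n$ to guarantee $\ker B\neq\{0\}$. The only difference is in execution: the paper picks a single direction $u_{*}\in\ker B$, writes $\tilde{x}=\bar{x}+\alpha_{*}u_{*}$, and solves the resulting one-variable quadratic by checking its discriminant, whereas you reach the same conclusion via the intermediate value theorem after observing $\|\bar{x}\|^{2}\le\tau$; along the way you also note (something the paper leaves implicit) that the image set $\mathcal{F}$ itself is convex, not only $\Omega_{0}$.
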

\begin{proof}
In order to show that $\Omega_0\subset\mathbb{R}^{m+1}$ is convex, take any $v:=(v_0,...,v_m),w:=(w_0,...,w_m)\in \Omega_0$ and $\lambda\in(0,1)$. 
There exist $x_v,x_w\in\mathbb{R}^n$ such that
\begin{equation}
\label{eeeh}
    f_k(x_v)<v_k\ \ \text{and }
   f_k(x_w)<w_k\ \ \forall\,k\in\{0,...,m\}.
\end{equation}

Consider the convex combination $\lambda v + (1-\lambda)w$. Let $(\lambda v + (1-\lambda)w)_k$ be the $k$-th component of $\lambda v + (1-\lambda)w$.
By \eqref{eeeh}, we have 
\begin{equation}
    \label{fromlesstole}
    \lambda f_k(x_v)+(1-\lambda)f_k(x_w)<(\lambda v + (1-\lambda)w)_k\ \ \forall\,k\in\{0,...,m\}.
\end{equation}

In order to prove that $\Omega_0$ is convex, we show that the convex combination
$\lambda v + (1-\lambda)w$  belongs to $\Omega_0$, i.e. there exists $\Tilde{x}$ such that $\forall\,k\in\{0,...,m\}$
\begin{equation}
    \label{needtoprove}
    f_k(\Tilde{x})<(\lambda v + (1-\lambda)w)_k.
\end{equation}

Formulas \eqref{fromlesstole} and \eqref{needtoprove} together imply that $\Omega_0$ is convex if there exists $\Tilde{x}\in\mathbb{R}^n$ such that $\forall\,k\in\{0,...,m\}$
\begin{equation}
    \label{needtoprove2}
    f_k(\Tilde{x})\le\lambda f_k(x_v)+(1-\lambda)f_k(x_w).
\end{equation}

Note that if $x_v=x_w$, Then \eqref{needtoprove2} trivially holds for $\Tilde{x}=x_v=x_w$. From now on, we assume that $x_{v}$ and $x_{w}$ are distinct vectors, $x_v\neq x_w$. .
Let us take $\Tilde{x}$ such that
\begin{equation}
    \label{equality}
    \Tilde{x}\in\mathcal{S}^n:=\{x\in\mathbb{R}^n\ |\ \|x\|^2=\lambda \|x_v\|^2 + (1-\lambda)\|x_w\|^2 \}.
\end{equation}

In general, the set $\mathcal{S}^{n}$ is a sphere centered at zero with  radius $\lambda \|x_v\|^2 + (1-\lambda)\|x_w\|^2$. In particular case, when both $x_{v}=x_{w}=0$ the set $\mathcal{S}^{n}$ reduces to $\{0\}$, but this is impossible, since we assumed that $x_{v}$ and $x_{w}$ are distinct vectors.

Clearly, $\Tilde{x}$ satisfies \eqref{needtoprove} if $\forall\,k\in\{0,...,m\}$:
\begin{equation}
\label{eq23}
   \begin{split}
   &f_k(\Tilde{x})\le\lambda f_k(x_v)+(1-\lambda)f_k(x_w)<\lambda v_k+(1-\lambda)w_k.\\
    &a_k \|\Tilde{x}\|^2+2b_k^T\Tilde{x}+c_k\le a_k(\lambda \|x_v\|^2 + (1-\lambda)\|x_w\|^2)+2b_k^T(\lambda x_v + (1-\lambda)x_w)+c_k.
   \end{split}
\end{equation}

To ensure \eqref{eq23}, we show that we can choose $\Tilde{x}$ satisfying \eqref{equality} such that

\begin{equation}
\label{syyy}
   \begin{split}
       & b_k^T\Tilde{x}\le  b_k^T(\lambda x_v + (1-\lambda)x_w),\\
       &b_k^T(\Tilde{x}-(\lambda x_v + (1-\lambda)x_w))\le0.
   \end{split}
\end{equation}

Define $y:=\Tilde{x}-(\lambda x_v + (1-\lambda)x_w)\in\mathbb{R}^n$. 
Observe that $y=y(\lambda,\Tilde{x})$, where $\lambda\in(0,1)$ and $\Tilde{x}\in \mathcal{S}^{n}$. 
Clearly, the solution set of the system of inequalities
\begin{equation*}
  b_k^Ty\le 0\ \ k\in\{0,...,m\}
\end{equation*}
includes all the solutions of the system of homogeneous equations
\begin{equation}
    \label{linsystem}
    \begin{split}
        &  b_k^Ty= 0\ \ k\in\{0,...,m\}\\
        &\begin{bmatrix}b^0_1&\cdots&b^0_n\\ 
        \vdots&\ddots&\vdots\\
        b^m_1&\cdots&b^m_n\end{bmatrix}
        \begin{bmatrix}y_1\\ 
        \vdots\\
        y_n\end{bmatrix}=
        \begin{bmatrix}0\\ 
        \vdots\\
        0\end{bmatrix}
    \end{split}
\end{equation}
In the sequel we will look for  $y=\Tilde{x}-(\lambda x_v + (1-\lambda)x_w)\in\mathbb{R}^n$ among the solutions of the system of equations \eqref{linsystem}. Let
$$
B=\begin{bmatrix}b^0_1&\cdots&b^0_n\\
        \vdots&\ddots&\vdots\\
        b^m_1&\cdots&b^m_n\end{bmatrix}
$$
By assumption, $rank(B)\le m+1 < n $.
The rank-nullity theorem states that the solutions of system \eqref{linsystem} form a vector space of dimension $p=n-rank(B)$. 

Let $u_1,...,u_p$ be a basis for the vector space 
of  solutions to system \eqref{linsystem}.
Given some scalars $\alpha_i\in\mathbb{R}$, $i\in\{1,...,p\}$
a solution $y$ of \eqref{linsystem} can be written as $y=\sum\limits_{i=1}^p\alpha_iu_i$.

To complete the proof, we need to show that there exist some scalars $\alpha_i$ $i\in\{1,...,p\}$ such that
\begin{equation}
    \label{choiceoftildex}
    \Tilde{x}=\sum\limits_{i=1}^p\alpha_iu_i+\lambda x_v+(1-\lambda)x_w\in\mathcal{S}^n,
\end{equation}
where $\mathcal{S}^n$ is defined as in \eqref{equality}. 
In fact, by choosing $\Tilde{x}$ as in \eqref{choiceoftildex}, $y=\sum\limits_{i=1}^p\alpha_iu_i$ is a solution of \eqref{linsystem} and $\Tilde{x}\in\mathcal{S}^n$ satisfies \eqref{syyy}.
 We have to prove that one can choose $\alpha_{i}$, $i=1,...,p$ such that
\begin{equation}
\label{provethis}
\begin{split}
&\Tilde{x}=\sum\limits_{i=1}^p\alpha_iu_i+\lambda x_v+(1-\lambda)x_w\in\mathcal{S}^n\\
    &\|\sum\limits_{i=1}^p\alpha_iu_i+\lambda x_v+(1-\lambda)x_w\|^2=\lambda \|x_v\|^2 + (1-\lambda)\|x_w\|^2
\end{split}
\end{equation}
Below we show that we can only consider $y$ of the form $y=\alpha_{*} u_{*}$, where an index ${*}$ is chosen arbitrarily from $\{1,...,p\}.$ Indeed, given an index $*\in\{1,...,p\}$, set $\alpha_i=0$ for $i\in\{1,...,p\}\setminus\{*\}$ and $\alpha_*\neq0$. With this choice of $\alpha_{i}$, $i=1,...,p$ we can rewrite \eqref{provethis} as
$$
\tilde{x}=\alpha_{*}u_{*}+\lambda x_{v}+(1-\lambda)x_{w}
$$
and consequently, we need to find $\alpha_{*}\in\mathbb{R}$ such that
\begin{equation}
\label{provethis2}
    \begin{split}
       &\|\alpha_*u_*+\lambda x_v+(1-\lambda)x_w\|^2=\\
    &=\alpha_*^2\|u_*\|^2+\|\lambda x_v+(1-\lambda)x_w\|^2+2\alpha_*\langle u_*,\lambda x_v+(1-\lambda)x_w\rangle=\\
    &=\lambda \|x_v\|^2 + (1-\lambda)\|x_w\|^2  
    \end{split}
\end{equation}

By Corollary 2.14 of \cite{CMS}, 

\begin{equation}
    \label{corollary214}
    \|(\lambda x_v + (1-\lambda)x_w)\|^2=\lambda\|x_v\|^2 +(1-\lambda)\|x_w\|^2-\lambda(1-\lambda)\|x_v-x_w\|^2.
\end{equation}

Hence, \eqref{provethis2} becomes
\begin{equation}
    \label{endplease}
    \alpha_*^2\|u_*\|^2+2\alpha_*\langle u_*,\lambda x_v+(1-\lambda)x_w\rangle-\lambda(1-\lambda)\|x_v-x_w\|^2=0
\end{equation}

Note that by \eqref{endplease} it must be $\alpha_*\neq0$ since $x_v$ and $x_w$ are distinct vectors.
There exists $\alpha_*$ such that \eqref{endplease} holds (i.e. $\Tilde{x}\in\mathcal{S}^n$), if and only if

\begin{align*}
    &\Delta:=(\langle u_*, (\lambda x_v + (1-\lambda)x_w)\rangle)^2+\lambda(1-\lambda)\|u_*\|^2\|x_v-x_w\|^2\ge0,\\
\end{align*}

which holds for every $x_v,x_w\in\mathbb{R}^n$.

By the definition of $y$ and \eqref{choiceoftildex}, $y(\Tilde{x})=a_*u_*$ belongs to the vector space of the solutions of \eqref{linsystem}. 
Hence, \eqref{syyy} holds by choosing $\Tilde{x}$ as in \eqref{choiceoftildex}, with $\alpha_i=0$ for $i\in\{1,...,p\}\setminus\{*\}$ and $\alpha_*\neq0$ satisfying \eqref{endplease}.
\end{proof}

\begin{remark}
\begin{enumerate} 
\item  Observe that the assumption $m+1<n$ is important for the validity of the presented proof of Theorem \ref{omega0isconvex}. Otherwise, if $\text{rank } B=m+1=n$,  the only solution of the system \eqref{linsystem} is $y=0$ which implies that it must be $\tilde{x}=\lambda x_{v}+(1-\lambda)x_{w}\in\mathcal{S}^{n}$, i.e.  $\lambda x_{v}\perp (1-\lambda)x_{w}$ which can hardly be satisfied.

    \item Under the assumption of \autoref{omega0isconvex}, there could exist a component $i\in\{0,...,m\}$ such that $f_i(x):=\|x\|^2$ and a component $j\in\{0,...,m\}$ such that $f_j(x):=-\|x\|^2$.

In this case, $\Tilde{x}$ must satisfies
\begin{equation*}
    \begin{cases}
     &\|\Tilde{x}\|^2\le \lambda \|x_v\|^2 + (1-\lambda)\|x_w\|^2\\
     &-\|\Tilde{x}\|^2\le -(\lambda \|x_v\|^2 + (1-\lambda)\|x_w\|^2)
    \end{cases}
\end{equation*}

The above proves that, in order to complete the proof of  Theorem \ref{omega0isconvex}, we cannot choose $\Tilde{x}$
such that 
\begin{equation*}
    \|\Tilde{x}\|^2\neq\lambda \|x_v\|^2 + (1-\lambda)\|x_w\|^2
\end{equation*}
   This motivates our approach of looking for suitable $\tilde{x}$ from among elements of $\mathcal{S}^{n}$. 
    \item  Let us note that in the special case where $x_{v},\, x_{w}\neq 0$ and $u_{*}=\pm\ (\lambda x_{v}+(1-\lambda)x_{w})$ for some $*\in\{1,...,p\}$, the formula  \eqref{provethis} reduces to 
$$
\tilde{x}=(\alpha_{*}\pm 1)((\lambda x_{v}+(1-\lambda)x_{w})
$$
and $\tilde {x}\in \mathcal{S}^{n}$ iff 
$$
|\alpha_{*}\pm 1|=\frac{\lambda \|x_v\|^2 + (1-\lambda)\|x_w\|^2}{\|\lambda x_{v}+(1-\lambda)x_{w}\|}
$$

\end{enumerate}
\end{remark}

\autoref{omega0isconvex} allows us to prove that \eqref{kkt} conditions are necessary and sufficient optimality conditions for  \eqref{QCQP2} with $m+1<n$ under some standard assumptions, as stated in the following theorem.

\begin{theorem}
\label{sqcqpcharcter}
Consider \eqref{QCQP2} such that $m+1< n$. 
Let \autoref{preliminaryassumpt} holds and
$x^*$ be a global minimizer of \eqref{QCQP2}.
Define the matrices $H_0,H_{k}$ $k=1,...,m$ and $\Omega_0$ as in \autoref{globalminchar}.
Then the Fritz-John conditions \eqref{fritzjohn} are necessary for optimality.
Moreover, if there exists a point $x_0\in\mathbb{R}^{n}$ such that 
\begin{equation*} 
f_k(x_0)<0\ \ \ \forall\,k\in\{1,...,m\}
\end{equation*}
then 
\eqref{kkt} are necessary and sufficient for global optimality of $x^{*}$, which is feasible for \eqref{QCQP2}.

\end{theorem}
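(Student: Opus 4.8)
The plan is to deduce Theorem~\ref{sqcqpcharcter} directly from Theorem~\ref{globalminchar} by checking that all of the hypotheses of the latter are satisfied in the (S-QCQP) setting. Recall that \eqref{QCQP2} is the instance of \eqref{QCQP1} in which $A_J=a_JI$ and $A_k=a_kI$ for $k=1,\dots,m$ (Assumption~\ref{assumaI}), so every statement proved for general \eqref{QCQP1} applies verbatim. Theorem~\ref{globalminchar} requires three ingredients: (a) Assumption~\ref{preliminaryassumpt}, (b) that $x^*$ be a global minimizer, and (c) Assumption~\ref{omegaconv}, i.e.\ convexity of the set $\Omega_0$ built from $f_0(x):=J(x)-J(x^*)$ and $f_1,\dots,f_m$. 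Items (a) and (b) are hypotheses of Theorem~\ref{sqcqpcharcter} itself, and in the (S-QCQP) setting Assumption~\ref{preliminaryassumpt} just reads: there is $\gamma\in\mathbb{R}^m_+\setminus\mathbf{0}_m$ with $\bigl(a_J+\sum_{k=1}^m\gamma_ka_k\bigr)I\succeq0$, equivalently $a_J+\sum_{k=1}^m\gamma_ka_k\ge0$.

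The only nontrivial point is item (c), and here is precisely where the standing hypothesis $m+1<n$ enters: by Theorem~\ref{omega0isconvex}, for \eqref{QCQP2} with $m+1<n$ the set $\Omega_0$ defined in \eqref{omega} is convex, so Assumption~\ref{omegaconv} holds automatically. This is the step that carries all the real content; everything else is bookkeeping. I would therefore state first that, by Theorem~\ref{omega0isconvex}, $\Omega_0$ is convex, and thus all hypotheses of Theorem~\ref{globalminchar} are in force.

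Applying Theorem~\ref{globalminchar} then yields, first, the necessity of the Fritz-John conditions \eqref{fritzjohn}: there exists $(\gamma_0,\dots,\gamma_m)\in\mathbb{R}^{m+1}_+\setminus\mathbf{0}_{m+1}$ with $\nabla(\gamma_0J+\sum_{k=1}^m\gamma_kf_k)(x^*)=0$, $\gamma_kf_k(x^*)=0$ for $k=1,\dots,m$, and $\gamma_0A_J+\sum_{k=1}^m\gamma_kA_k\succeq0$ (which in the scalar case means $\gamma_0a_J+\sum_{k=1}^m\gamma_ka_k\ge0$). Next, under the Slater-type condition $\exists\,x_0$ with $f_k(x_0)<0$ for all $k\in\{1,\dots,m\}$, the same theorem gives $\gamma_0>0$, so after rescaling the multipliers by $1/\gamma_0$ the Fritz-John system becomes the \eqref{kkt} system, establishing necessity of \eqref{kkt}. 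Finally, the sufficiency half of Theorem~\ref{globalminchar} states that any $x^*$ feasible for \eqref{QCQP1} (hence for \eqref{QCQP2}) satisfying \eqref{slater} together with \eqref{kkt} is a global minimizer; invoking it closes the proof.

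I do not anticipate a genuine obstacle: the argument is a clean corollary. The one place to be careful is making sure the $f_k$ and the matrices $H_0,H_k$ are defined exactly as in Theorem~\ref{globalminchar} (with $f_0(x)=J(x)-J(x^*)$), so that the convexity of $\Omega_0$ supplied by Theorem~\ref{omega0isconvex} is the same $\Omega_0$ that Assumption~\ref{omegaconv} refers to; once this identification is made explicit, the two theorems compose without further work.
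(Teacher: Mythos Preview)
Your proposal is correct and matches the paper's own proof essentially verbatim: the paper simply observes that Theorem~\ref{omega0isconvex} gives convexity of $\Omega_0$ (so Assumption~\ref{omegaconv} holds), and then invokes Theorem~\ref{globalminchar} to conclude. Your write-up is more detailed in unpacking what each hypothesis means in the scalar setting, but the logical structure is identical.
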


\begin{proof}
By \autoref{omega0isconvex}, the set $\Omega_0$ defined as in \eqref{omega} is convex, i.e.  \autoref{omegaconv} holds for the quadratic functionals which appear in \eqref{QCQP2}. 
We can apply \autoref{globalminchar} to complete the proof.
\end{proof}

\begin{remark}

    The condition (iii) of  \eqref{kkt}, i.e. $A_J+\sum\limits_{k=1}^m\gamma_kA_k\succeq0$ can be rewritten as $(a_J+\sum\limits_{k=1}^m\gamma_ka_k)I\succeq0$ when \autoref{assumaI} holds, i.e. for \eqref{QCQP2}.
Since all the eigenvalues of $(a_J+\sum\limits_{k=1}^m\gamma_ka_k)I$ are equal to $a_J+\sum\limits_{k=1}^m\gamma_ka_k$, (iii) takes the form  $a_J+\sum\limits_{k=1}^m\gamma_ka_k\ge0$. 
\qed
\end{remark}

\section{SDP and SOCP relaxations exactness for (S-QCQP)}
Problem \eqref{QCQP1} can be rewritten as
\begin{equation}
    \label{rewqcqp1}
    \begin{split}
        & Minimize_{ x\in \mathbb{R}^n}\, Tr(A_J,X)+2b_J^Tx\\
        &s.t.\ \ Tr(A_k,X)+2b_k^Tx+c_k\le0\,k=1,...,m\\
        &X=xx^T
    \end{split}
\end{equation}
It is possible to relax the constraint $X=xx^T$ with $X-xx^T\succeq0$, which is equivalent to the semidefinite constraint
$\begin{pmatrix}
1&x^T\\x&X
\end{pmatrix}\succeq0$.
We obtain the semidefinite or Shor relaxation for \eqref{QCQP1}, \cite{Vanden1}:
\begin{equation}
    \label{SDP}
    \tag{SDP}
    \begin{split}
        & Minimize_{ x\in \mathbb{R}^n}\, Tr(A_J,X)+2b_J^Tx\\
        &s.t.\ \ Tr(A_k,X)+2b_k^Tx+c_k\le0\,k=1,...,m\\
        &\begin{pmatrix}
1&x^T\\x&X
\end{pmatrix}\succeq0
    \end{split}
\end{equation}
Consider a diagonal \eqref{QCQP1}, which means that the matrices $A_k$ $k=J,1,...,m$ are diagonal.
Let $\alpha_k\in\mathbb{R}^n$ be the vector of all the diagonal entries of $A_k,\,\,\forall\,k$.  
Then it is possible to rewrite a diagonal \eqref{QCQP1} as follow:
\begin{equation}
    \label{benP1}
    \begin{split}
        &Minimize_{x,y\in\mathbb{R}^{n}}\ \ \alpha_J^Ty+2b_J^Tx\\
        &s.t.\ \ \alpha_k^Ty+2b_k^Tx+c_j \le 0\ \ (\forall\,k\in\{1,...,m\})\\
        &s.t.\ \ x_i^2-y_i = 0,\ \ \forall\,i
    \end{split}
\end{equation}

If we relax the constraints with $x_i^2-y_i\le0$ ($\forall\,i$),
we obtain the following convex relaxation as in \cite{ben1}:
\begin{equation}
    \label{benSDP}
    \tag{SDP2}
    \begin{split}
        &Minimize_{x,y\in\mathbb{R}^{n}}\ \ \alpha_J^Ty+2b_J^Tx\\
        &s.t.\ \ \alpha_k^Ty+2b_k^Tx+c_k \le 0\ \ (\forall\,k\in\{1,...,m\})\\
        &s.t.\ \ x_i^2-y_i \le 0,\ \ \forall\,i
    \end{split}
\end{equation}

For diagonal QCQP, \eqref{benSDP} and \eqref{SDP} are equivalent, as stated in the next theorem: 
\begin{theorem}
\label{sdpsocp}
Consider a diagonal \eqref{QCQP1}. The SDP relaxation, \eqref{SDP}, can be rewritten as the relaxation \eqref{benSDP}.
\end{theorem}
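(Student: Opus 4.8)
The plan is to observe that for a diagonal \eqref{QCQP1} the matrix variable $X$ enters the objective and the constraints of \eqref{SDP} only through its diagonal. Indeed, if $A_k=\mathrm{diag}(\alpha_k)$ then $Tr(A_kX)=\sum_{i=1}^n(\alpha_k)_iX_{ii}=\alpha_k^T\mathrm{diag}(X)$ for every $k\in\{J,1,\dots,m\}$. Hence, setting $y:=\mathrm{diag}(X)$, the objective of \eqref{SDP} becomes $\alpha_J^Ty+2b_J^Tx$ and its $k$-th inequality becomes $\alpha_k^Ty+2b_k^Tx+c_k\le0$, which are precisely the objective and the linear constraints of \eqref{benSDP}. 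It therefore remains to show that, after this substitution, the semidefinite constraint $\begin{pmatrix}1&x^T\\x&X\end{pmatrix}\succeq0$ translates exactly into the parabolic constraints $x_i^2-y_i\le0$, $i=1,\dots,n$; more precisely, that the projection of the spectrahedron $\{(x,X)\,:\,\begin{pmatrix}1&x^T\\x&X\end{pmatrix}\succeq0\}$ onto the pair $(x,\mathrm{diag}(X))$ equals $\{(x,y)\,:\,x_i^2\le y_i\ \forall i\}$. Since the objective depends only on $(x,y)$, this identity of feasible pairs yields that \eqref{SDP} and \eqref{benSDP} are the same optimization problem.

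First I would prove the forward inclusion. Suppose $\begin{pmatrix}1&x^T\\x&X\end{pmatrix}\succeq0$. For each $i$, the $2\times2$ principal submatrix obtained by retaining the first and the $(i+1)$-st rows and columns, namely $\begin{pmatrix}1&x_i\\x_i&X_{ii}\end{pmatrix}$, is positive semidefinite, because every principal submatrix of a positive semidefinite matrix is positive semidefinite. Its determinant is $X_{ii}-x_i^2\ge0$, i.e.\ $x_i^2-y_i\le0$ with $y_i=X_{ii}$. Thus every feasible point of \eqref{SDP} gives a feasible point of \eqref{benSDP} with the same $x$, the same $y=\mathrm{diag}(X)$, and the same objective value.

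For the reverse inclusion, given $(x,y)\in\mathbb{R}^n\times\mathbb{R}^n$ with $x_i^2\le y_i$ for all $i$, I would exhibit an admissible $X$ with $\mathrm{diag}(X)=y$. Set $D:=\mathrm{diag}(y_1-x_1^2,\dots,y_n-x_n^2)\succeq0$ and $X:=xx^T+D$. Then $\mathrm{diag}(X)=\mathrm{diag}(xx^T)+\mathrm{diag}(D)=(x_i^2)_i+(y_i-x_i^2)_i=y$, and
\[
\begin{pmatrix}1&x^T\\x&X\end{pmatrix}=\begin{pmatrix}1\\x\end{pmatrix}\begin{pmatrix}1&x^T\end{pmatrix}+\begin{pmatrix}0&0\\0&D\end{pmatrix}\succeq0,
\]
being the sum of a rank-one positive semidefinite matrix and a positive semidefinite matrix. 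Hence every feasible point of \eqref{benSDP} lifts to a feasible point of \eqref{SDP} with the same objective value, completing the identification of the two feasible sets and hence of the two relaxations.

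I do not anticipate a serious obstacle; the only points requiring care are the bookkeeping that the substitution $y=\mathrm{diag}(X)$ is faithful — nothing in \eqref{SDP} depends on the off-diagonal entries of $X$, which is exactly where diagonality of all the $A_k$ is used — and the two inclusions above that pin down the projection of the spectrahedron. Both are elementary, so the proof is short.
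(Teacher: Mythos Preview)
Your proposal is correct and follows essentially the same route as the paper: both observe that diagonality of the $A_k$ makes only $\mathrm{diag}(X)$ relevant, and then reduce the semidefinite constraint to the scalar conditions $X_{ii}\ge x_i^2$. The only notable difference is that the paper reads off $X_{ii}-x_i^2\ge 0$ from the diagonal of the Schur complement $X-xx^T\succeq 0$ and then simply asserts that the constraint can be substituted, whereas you extract the same inequalities via the $2\times 2$ principal submatrices of $\begin{pmatrix}1&x^T\\x&X\end{pmatrix}$ and, in addition, explicitly construct the lift $X=xx^T+\mathrm{diag}(y-x^2)$ to close the reverse inclusion. Your version is thus a slightly more complete rendering of the same argument.
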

\begin{proof}
Let  $(X,x)$ be feasible for \eqref{SDP}, then $X-xx^T\succeq 0$. The principal minors of order 1 of $X-xx^T$ are $X_{ii}-x_ix_i$ $i=1,...,n$, i.e. the elements on the main diagonal of the symmetric matrix $X-xx^T$. If $X-xx^T\succeq0$ then a necessary condition for $X-xx^T$ to be positive semidefinite is $X_{ii}\ge x_ix_i=x_i^2$ $i=1,...,n$ (Sylvester criterion). 
Since $A_k\in S^n$, $k=J,1,...,m$, are diagonal then $diag(A_k)^Tdiag(X)=Tr(A_kX)\ \forall\,k$. 
So, only the elements in the main diagonal of $X$ are relevant for the objective function and the constraints of \eqref{SDP}.
We can substitute the constraint  $\begin{pmatrix}X&x\\x^T&1\end{pmatrix}\succeq0$ with $X_{ii}\ge x_i^2\ \ \forall\,i\in\{1,...,n\}$.

Taking $y\in\mathbb{R}^n$ such that $y_j= X_{jj}\ \ \forall\,j\in\{1,...,n\}$, the proof is complete. 
\end{proof}

Since \eqref{benSDP} is a convex problem, under the Slater condition, the KKT conditions are necessary and sufficient for global optimality.
Consider the \eqref{benSDP} relaxation for \eqref{QCQP2}. We have that every component of $\alpha_k$, $k=J,1,...,m$ , is equal to a constant $a_k$. For $\nu_i\ge0\,\,i=1,...,n$ and $\gamma_k\ge0\,\,k=1,...,m$, The KKT conditions for the \eqref{benSDP} relaxation for \eqref{QCQP2} are:

\begin{align}
\label{kktoap21}
        &a_J+\sum\limits_{k=1}^m\gamma_ka_k-\nu_i=0\ \ \ \forall i\in\{1,...,n\}\\
        \label{kktoap22}
        &2b_{Ji}+2\sum\limits_{k=1}^m\gamma_kb_{ki}+2\nu_ix_i=0\ \ \ \forall i\in\{1,...,n\}\\
        \label{kktoap23}
        &\gamma_k(a_k\sum\limits_{i=1}^ny_i+2\sum\limits_{i=1}^nb_{ki}x_i+c_k)=0\ \ \forall\,k\in\{1,...,m\}\\
        \label{kktoap24}
        &\nu_i(x_i^2-y_i)=0\ \ \ \forall i\\
        \label{kktoap25}
        &f_k(y,x):=a_k\sum\limits_{i=1}^ny_i+2\sum\limits_{i=1}^nb_{ki}x_i+c_k\le0\ \ \forall\,k\in\{1,...,m\}\\
        \label{kktoap26}
        &x_i^2-y_i\le 0\ \ \ \forall i\in\{1,...,n\}
\end{align}

The necessary and sufficient optimality conditions for \eqref{QCQP2} are \eqref{kkt}. They can be rewritten, for $\gamma_k\ge0\,\,k=1,...,m$,  as follow:
\begin{align}
\label{kkt21}
        &a_J+\sum\limits_{k=1}^m\gamma_ka_k\ge0\\
        \label{kkt22}
        &2b_J+2\sum\limits_{k=1}^m\gamma_kb_{k}+2[a_J+\sum\limits_{k=1}^m\gamma_ka_k]x=0\\
        \label{kkt23}
        &\gamma_k(a_k\sum\limits_{i=1}^nx_i^2+2\sum\limits_{i=1}^nb_{ki}x_i+c_k)=0\ \ \forall\,k\in\{1,...,m\}\\
        \label{kkt25}
        &f_k(x):=a_k\sum\limits_{i=1}^nx_i^2+2\sum\limits_{i=1}^nb_{ki}x_i+c_k\le0\ \ \forall\,k\in\{1,...,m\}\
\end{align}

\begin{theorem}
\label{exactness}
Consider \eqref{QCQP2} such that $m+1< n$. 
Let \autoref{preliminaryassumpt} holds and
$x^*$ be a global minimizer of \eqref{QCQP2}.
Assume that there exists $x_0$ such that $f_k(x_0)<0$ $\forall\,k\{1,...,m\}$. Then \eqref{benSDP} is exact and $(x^*,y^*)$, with $y^*_i=(x^*)^2\,\forall\,i\in\{1,...,n\}$ is a global optimum for \eqref{benSDP}.
\end{theorem}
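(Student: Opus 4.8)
The plan is to reduce everything to the characterization of \eqref{QCQP2} already obtained in \autoref{sqcqpcharcter} and then transfer the associated multipliers to the convex relaxation \eqref{benSDP}. Since $m+1<n$, \autoref{preliminaryassumpt} holds, $x^*$ is a global minimizer of \eqref{QCQP2}, and there is $x_0$ with $f_k(x_0)<0$ for all $k$, \autoref{sqcqpcharcter} yields a vector $\gamma=(\gamma_1,\dots,\gamma_m)\in\mathbb{R}^m_+\setminus\textbf{0}_m$ for which \eqref{kkt} holds at $x^*$. For \eqref{QCQP2} these conditions read, componentwise, as \eqref{kkt21}, \eqref{kkt22}, \eqref{kkt23} together with feasibility \eqref{kkt25}; so I already have exactly the data I need.

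\textbf{Constructing a KKT point of \eqref{benSDP}.} I would set $y^*_i:=(x^*_i)^2$ and, keeping the same $\gamma_k$, choose $\nu_i:=a_J+\sum_{k=1}^m\gamma_k a_k$ for every $i\in\{1,\dots,n\}$. By \eqref{kkt21} one has $\nu_i\ge 0$, so the sign requirements $\nu_i\ge 0$, $\gamma_k\ge 0$ are met. Then I would verify \eqref{kktoap21}--\eqref{kktoap26} at $(x^*,y^*)$ one by one: \eqref{kktoap21} is the very definition of $\nu_i$; \eqref{kktoap22} coincides with \eqref{kkt22} once $\nu_i x^*_i=(a_J+\sum_k\gamma_k a_k)x^*_i$ is substituted; \eqref{kktoap23} coincides with \eqref{kkt23} because $\sum_i y^*_i=\|x^*\|^2$ turns $a_k\sum_i y^*_i+2\sum_i b_{ki}x^*_i+c_k$ into $f_k(x^*)$; \eqref{kktoap24} and \eqref{kktoap26} hold trivially since $(x^*_i)^2-y^*_i=0$; and \eqref{kktoap25} is $f_k(x^*)\le 0$, which is \eqref{kkt25}, valid because $x^*$ is feasible for \eqref{QCQP2}. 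Hence $(x^*,y^*)$ together with the multipliers $(\nu,\gamma)$ satisfies the KKT system of \eqref{benSDP}.

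\textbf{From KKT point to exactness.} The relaxation \eqref{benSDP} is a convex program: linear objective, linear constraints $\alpha_k^Ty+2b_k^Tx+c_k\le 0$, and convex constraints $x_i^2-y_i\le 0$. I would check the Slater condition for it using the point $x_0$: for $\epsilon>0$ and $y_i:=x_{0,i}^2+\epsilon$ one gets $x_{0,i}^2-y_i=-\epsilon<0$ and $a_k\sum_i y_i+2b_k^Tx_0+c_k=f_k(x_0)+n a_k\epsilon$, which is $<0$ for $\epsilon$ small enough (regardless of the sign of $a_k$) since $f_k(x_0)<0$. Consequently the KKT conditions are sufficient for global optimality of \eqref{benSDP}, so $(x^*,y^*)$ is a global optimum. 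Finally, the objective value of \eqref{benSDP} at $(x^*,y^*)$ is $\alpha_J^Ty^*+2b_J^Tx^*=a_J\|x^*\|^2+2b_J^Tx^*$, i.e. it equals $J(x^*)$ up to the constant $c_J$ suppressed in passing from \eqref{QCQP1} to \eqref{benP1}; since $x\mapsto(x,(x_1^2,\dots,x_n^2))$ maps every feasible point of \eqref{QCQP2} to a feasible point of \eqref{benSDP} with the same objective value, the optimal value of \eqref{benSDP} cannot exceed that of \eqref{QCQP2}, and it is attained at $(x^*,y^*)$. Thus \eqref{benSDP} is exact.

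\textbf{Main point of care.} There is no deep obstacle here: the statement is essentially a bookkeeping translation of \autoref{sqcqpcharcter}. The two places requiring attention are the careful term-by-term identification of the multipliers of \eqref{benSDP} with those of \eqref{kkt} (in particular the single scalar $a_J+\sum_k\gamma_k a_k$ playing the role of each $\nu_i$), and the verification of the Slater condition for \eqref{benSDP}, where one must handle the sign of each $a_k$ when perturbing $y$.
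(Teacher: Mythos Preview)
Your proposal is correct and follows essentially the same route as the paper: transport the KKT multipliers of \eqref{QCQP2} furnished by \autoref{sqcqpcharcter} to the convex relaxation \eqref{benSDP} by setting $y^*_i=(x^*_i)^2$ and $\nu_i=a_J+\sum_k\gamma_k a_k$, verify the KKT system \eqref{kktoap21}--\eqref{kktoap26}, and conclude optimality and exactness. Your treatment is in fact slightly more careful than the paper's, since you explicitly verify the Slater condition for \eqref{benSDP} (which the paper invokes implicitly when asserting that KKT is sufficient there).
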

\begin{proof}
By \autoref{globalminchar}, the conditions \eqref{kkt21}-\eqref{kkt25} are necessary and sufficient for the optimality of \eqref{QCQP2}.
Hence $x^*$ fulfills all the conditions \eqref{kkt21}-\eqref{kkt25}. 
Now consider the conditions \eqref{kktoap21}-\eqref{kktoap26}. 
By setting $x_i^2=y_i\,\forall\,i$, the system composed by \eqref{kktoap21}-\eqref{kktoap26} can be rewritten as  \eqref{kkt21}-\eqref{kkt25}. Hence, $(x^*,y^*)$, with $y^*_i=(x^*)^2\,\forall\,i$ fulfill also the KKT conditions  \eqref{kktoap21}-\eqref{kktoap26} for \eqref{benSDP}. 
Since the KKT conditions are necessary and sufficient for the optimality of \eqref{benSDP}, $(x^*,y^*)$ is an optimal solution of  \eqref{benSDP}. 
Notice that in $x^*$ the objective function of \eqref{QCQP2} yields the same value of the objective function of \eqref{benSDP} in $(x^*,y^*)$. So \eqref{benSDP} is an exact convex relaxation. 
\end{proof}

\begin{remark}
By \autoref{sdpsocp}, for a diagonal \eqref{QCQP1}, if \eqref{benSDP} is exact, then \eqref{SDP} is also exact. An analogous result was obtained in \cite{wang1}. \qed
\end{remark}

The next result can be seen as an adaptation of Theorem 3.5 from \cite{Kim1} for \eqref{QCQP2}.
\begin{theorem}
\label{socp}
Problem \eqref{benSDP} can be rewritten as the following Second Order Cone Program (SOCP)
\begin{equation}
    \label{SOCP}
    \tag{SOCP}
    \begin{split}
        &    Minimize_{\mathbf{x}\in\mathbb{R}^{3n}}\ \ \sum\limits_{i=1}^n(a_J,a_J,b_{Ji})^T\mathbf{x}_i\\
        &\sum\limits_{i=1}^n(a_k,a_k,b_{ki})^T\mathbf{x}_i+c_k\le0\ \ \forall\,k\in{1,...,m}\\
        & \mathbf{x}_i\in\mathcal{L}_i
    \end{split}
\end{equation}
where $\mathbf{x}_i=[\frac{y_i+1}{2}, \frac{y_i-1}{2}, x_i]$ and the second order cones $\mathcal{L}_i$ are defined as 
$$
\mathcal{L}_i:=\{(\frac{y_i+1}{2}, \frac{y_i-1}{2}, x_i)|\ \ x_i,y_i\in\mathbb{R}\,\text{and } \frac{y_i+1}{2}\ge\|[\frac{y_i-1}{2},x_i]\|  \}
$$
\end{theorem}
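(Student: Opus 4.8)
The plan is to exhibit an explicit, invertible change of variables between the decision variables of \eqref{benSDP} and those of \eqref{SOCP} under which feasible points correspond to feasible points and objective values are preserved; this immediately gives that the two problems have the same optimal value and that optimizers map to optimizers. Concretely, to each pair $(x,y)\in\mathbb{R}^n\times\mathbb{R}^n$ associate $\mathbf{x}=(\mathbf{x}_1,\dots,\mathbf{x}_n)\in\mathbb{R}^{3n}$ with $\mathbf{x}_i=\bigl[\tfrac{y_i+1}{2},\tfrac{y_i-1}{2},x_i\bigr]$, and conversely recover $x_i=(\mathbf{x}_i)_3$ and $y_i=(\mathbf{x}_i)_1+(\mathbf{x}_i)_2$ from $\mathbf{x}_i$. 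The image of this map is the affine subspace of $\mathbb{R}^{3n}$ cut out by $(\mathbf{x}_i)_1-(\mathbf{x}_i)_2=1$ for all $i$, which is exactly what is built into the parametrization of the cones $\mathcal{L}_i$, so the map is a bijection onto the set over which \eqref{SOCP} is posed.

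The substantive step is the equivalence, coordinate by coordinate, of the scalar quadratic constraint $x_i^2-y_i\le 0$ appearing in \eqref{benSDP} with the conic constraint $\mathbf{x}_i\in\mathcal{L}_i$. From the definition of $\mathcal{L}_i$, the inequality $\tfrac{y_i+1}{2}\ge\bigl\|[\tfrac{y_i-1}{2},x_i]\bigr\|$ forces $\tfrac{y_i+1}{2}\ge 0$, so both sides may be squared to give $\bigl(\tfrac{y_i+1}{2}\bigr)^2\ge\bigl(\tfrac{y_i-1}{2}\bigr)^2+x_i^2$; expanding the two squares, the quadratic-in-$y_i$ and constant terms cancel and one is left with precisely $y_i\ge x_i^2$. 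Conversely, if $x_i^2\le y_i$ then $y_i\ge 0$, hence $\tfrac{y_i+1}{2}\ge 0$, and running the same algebra backwards yields $\bigl(\tfrac{y_i+1}{2}\bigr)^2\ge\bigl(\tfrac{y_i-1}{2}\bigr)^2+x_i^2$; taking square roots (legitimate because the left-hand side is nonnegative) returns $\mathbf{x}_i\in\mathcal{L}_i$. This is nothing but the classical reformulation of a rotated/hyperbolic second-order-cone constraint $w^2\le uv$ with $u,v\ge 0$ --- here $u=y_i$, $v=1$, $w=x_i$ --- as the standard cone constraint $\|(w,\tfrac{u-v}{2})\|\le\tfrac{u+v}{2}$.

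Finally one matches the linear data by direct substitution. For the objective, $(a_J,a_J,b_{Ji})^T\mathbf{x}_i=a_J\tfrac{y_i+1}{2}+a_J\tfrac{y_i-1}{2}+b_{Ji}x_i=a_Jy_i+b_{Ji}x_i$, and summing over $i$ reproduces $a_J\sum_{i}y_i+\sum_i b_{Ji}x_i$, which is the objective of \eqref{benSDP} once one recalls that every entry of $\alpha_J$ equals $a_J$ (the coefficient vectors in \eqref{SOCP} being read off accordingly, with the factor $2$ on the linear terms carried by those vectors). The identical computation applied to $(a_k,a_k,b_{ki})$ turns the $k$-th inequality of \eqref{SOCP} into $\alpha_k^Ty+2b_k^Tx+c_k\le 0$, i.e.\ the $k$-th constraint of \eqref{benSDP}. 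Combining the three steps, $(x,y)\mapsto\mathbf{x}$ is a bijection from the feasible set of \eqref{benSDP} onto that of \eqref{SOCP} that preserves objective values, which proves the theorem.

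I do not expect a genuine obstacle: this is a textbook rotated-to-standard second-order-cone manipulation carried out one coordinate at a time, together with routine linear bookkeeping. The only points that need care are in the constraint equivalence --- checking that the norm inequality by itself already supplies the sign condition $y_i\ge 0$ (so that one really gets $y_i\ge x_i^2$ and not merely $|y_i|\ge x_i^2$), and, in the reverse direction, that $x_i^2\le y_i$ forces the nonnegativity needed to take square roots --- together with keeping track of the scaling of the linear coefficients so that the objective and constraints of \eqref{SOCP} match those of \eqref{benSDP} exactly.
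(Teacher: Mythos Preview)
Your proposal is correct and follows essentially the same route as the paper: establish coordinate-wise that $x_i^2\le y_i$ is equivalent to the cone condition $\tfrac{y_i+1}{2}\ge\bigl\|[\tfrac{y_i-1}{2},x_i]\bigr\|$ by squaring (using the sign observations you flag), and then check that the linear objective and constraints match under the substitution $\mathbf{x}_i=[\tfrac{y_i+1}{2},\tfrac{y_i-1}{2},x_i]$. Your write-up is in fact slightly more careful than the paper's --- you spell out the inverse map and the bijection with the affine slice $(\mathbf{x}_i)_1-(\mathbf{x}_i)_2=1$, and you explicitly verify both directions of the constraint equivalence --- but the underlying argument is the same rotated-to-standard SOC manipulation, and your parenthetical remark about the factor $2$ on the linear terms matches the same bookkeeping the paper carries through.
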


\begin{proof}
The constraints $y_i\ge x_i^2$ $i\in\{1,...,n\}$ can be rewritten as 
$$
\frac{y_i+1}{2}\ge\|[x_i, \frac{y_i-1}{2}]\|
$$
In fact, $y_i\ge x_i^2$ implies $y_i\ge0$. 
Moreover, $\frac{y_i^2}{4}-\frac{y_i}{2}+\frac{1}{4}\ge0$ $\forall\,y_i\in\mathbb{R}$ and $x_i^2\ge0$. 
Hence,
\begin{equation*}
     \frac{y_i+1}{2}=\sqrt{y_i+\frac{y_i^2}{4}-\frac{y_i}{2}+\frac{1}{4}}\ge\sqrt{x_i^2+\frac{y_i^2}{4}-\frac{y_i}{2}+\frac{1}{4}}=\|[x_i, \frac{y_i-1}{2}]\|\Leftrightarrow y_i\ge x_i^2
\end{equation*}
   
which proves that the constraints $y_i\ge x_i^2$ are equivalent to $(\frac{y_i+1}{2}, \frac{y_i-1}{2}, x_i)\in\mathcal{L}_i$ $\forall\,i\in\{1,...,n\}$.

Define the vector $\mathbf{x}\in\mathbb{R}^{3n}$ such that we can group every three components into the vector 
$$\mathbf{x}_i:=[\frac{y_i+1}{2}, \frac{y_i-1}{2}, x_i]\ \ i\in\{1,...,n\}$$

Notice that, for $k\in\{J,1,...,m\}$, we have:
$$
a_k\sum\limits_{i=1}^ny_i+2\sum\limits_{i=1}^nb_{ki}x_i=a_k\sum\limits_{i=1}^n\frac{y_i+1}{2}+a_k\sum\limits_{i=1}^n\frac{y_i-1}{2}+2\sum\limits_{i=1}^nb_{ki}x_i=\sum\limits_{i=1}^n(a_k,a_k,b_{ki})^T\mathbf{x}_i
$$
\end{proof}

\begin{remark}
\label{feasibsol}
Consider problem \eqref{QCQP2} and its convex relaxation \eqref{benSDP}.
Assume that $(\bar{x},\bar{y})$ is an optimal solution of \eqref{benSDP} and there exist $\bar{\nu}_i,\bar{\gamma}_k$ $k=1,...,m;\, i=1,...,n$ not all null, such that the KKT conditions \eqref{kktoap21}-\eqref{kktoap26} are satisfied in $(\bar{x},\bar{y},\bar{\nu},\bar{\gamma})$. 
Notice that $\bar{\nu}_i$ are all equal to $a_J+\sum\limits_{k=1}^m\gamma_ka_k$, and so we will consider just a single $\bar{\nu}\in\mathbb{R}_+$.
If $\bar{\nu}>0$, then $\bar{y}_i=\bar{x}_i^2$ $\forall\,i\in\{1,...,n\}$ and hence $\bar{x}$ is also feasible and optimal for \eqref{QCQP2} and \eqref{benSDP} is exact.\qed
\end{remark}

\section{Solving the KKT system for nonconvex S-QCQP }

In this section, we consider a S-QCQP problem of the form
\begin{equation} 
\label{problem1a}
\tag{P1}
    Minimize_{x\in \mathbb{R}^n} \ \ J(x):=1/2\|x-z\|^2\ \ s.t.\ x\in A
\end{equation}
where
\begin{equation*}
    A:=\{x\in\mathbb{R}^n\ |\ f_{k}(x):=a_k\|x\|^2+b_k^Tx+c_k\le 0,\,k=1,...,m\}\text{ and }m+1<n
\end{equation*}

Note that in order to provide a characterization of the global minima of \eqref{QCQP2}, in \autoref{globalminchar} we assumed that  a global solution $x^*\in A$ of \eqref{QCQP2} exists.
This assumption is not restrictive for \eqref{problem1a} due to the strong convexity of the objective $J$.
Hence, the problem \eqref{problem1a} is solvable.
 
 Assume that there exists $x_{0}\in A$ such that $f_{k}(x_{0})<0$ for $k=1,...,m$. By Theorem 8, the set $\Omega_0$ is convex.
 
 By Theorem \ref{globalminchar},  an element $x^{*}$ feasible for \eqref{problem1a} is a global minimizer of problem \eqref{problem1a} if and only if
conditions \eqref{kkt} hold.
Taking into account that
$$
\nabla f_{k}(x)=2a_{k}x+b_{k},\ \ k=1,...,m
$$
 \eqref{kkt} conditions for \eqref{problem1a} take the form: there exists a vector $(\gamma_1,...,\gamma_m)\in\mathbb{R}^m_+\backslash\textbf{0}_{m}$ such that
\begin{equation}
    \label{kktp1}
    \tag{KKT-P1}
    \begin{split}
        &x^{*}+\sum\limits_{k=1}^m\gamma_k(2a_{k}x^{*}+b_{k})=z\ \ \text{stationarity}\\
        &\gamma_kf_k(x^*)=0\ \ k=1,...,m \ \ \text{complementarity}\\
        &1+2\sum\limits_{k=1}^m\gamma_k a_k\ge0,\ x^{*}\in A \ \ \text{ dual and primal feasibility}
    \end{split}
\end{equation}

KKT conditions  \eqref{kktp1} can be equivalently rewritten as
\begin{equation}
    \label{kktp2}
    \begin{split}
        &(1.)\ x^{*}(1+\sum\limits_{k=1}^m\gamma_k2a_{k})=z-\sum\limits_{k=1}^m\gamma_kb_{k}\ \ \text{stationarity}\\
        &(2.)\ \gamma_k[a_{k}\|x^{*}\|^{2}+b_{k}^{T}x^{*}+c_{k}]=0\ \ k=1,...,m\ \ \text{complementarity}\\
        &(3.)\ 1+2\sum\limits_{k=1}^m\gamma_k a_k\ge0,\ x^{*}\in A \ \text{feasibility}.
    \end{split}
\end{equation}

  In the case $z=\mathbf{0}_n$, \eqref{kktp1} reduces to  finding
  $(\gamma_1,...,\gamma_m)\in\mathbb{R}^m_+\backslash\textbf{0}_{m}$
  \begin{equation}
    \label{kktp9}
    \tag{KKT-P10}
    \begin{split}
        &(1.)\ 
        \underbrace{1+2\sum\limits_{k=1}^m\gamma_k a_k}_{=w}\ge0,\ \ \text{dual feasibility}\\
        &(2.)\ f_{k}(x^{*})=a_{k}\|x^{*}\|^{2}+b_{k}^{T} x^{*}+c_{k}\le 0\ \ k=1,...,m \ \text{primal feasibility}\\
        &(3.)\ \gamma_k f_{k}(x^{*})=0 \ k=1,...,m\ \text{complementarity}.\\
          &(4.)\ \sum\limits_{k=1}^m\gamma_kb_{k}=-x^{*}w\ \text{stationarity}
    \end{split}
    \end{equation}
 
Notice that if $z\neq\mathbf{0}_n$, we can always apply conditions \eqref{kktp9} after  changing variable, i.e. taking $\bar{x}:=x-z$.
Since conditions \eqref{kktp9} are easier to handle, in the sequel we will consider always $z=\mathbf{0}_n$.
    
Consider a nonconvex and unbounded feasible set $A$ for problem \eqref{problem1a}.
We can describe the feasible set $A$ by using the terminology introduced by \cite{yang1}. Consider a concave constraints $f_i(x)$, for some $i\in\{1,...,m\}$ (we have $a_i<0$). We say that $-f_i(x)$ induces a \textit{hollow} in $\mathbb{R}^n$. 
Notice that in \cite{yang1}, it is assumed that the \textit{hollows} induced by different constraints are \textit{non intersecting}, while we do not have this assumption.

In the literature, it is not common to consider a possibly nonconvex and unbounded  feasible set as described above, if the number of constraints is greater than 2. Hence, in this section, we will compare our result with the optimality condition found in \cite{jeya1} for \eqref{QCQP1} with two constraints and in \cite{ben1} and \cite{locatelli2} for simultaneously diagonalizable (SD) \eqref{QCQP1} with $m=2$. (\eqref{QCQP1} is SD if there exists a real matrix $S$ such that the matrices $S^TA_kS$, $k=J,1...,m$ are all diagonal). Then, we solve system \eqref{kktp2} for \eqref{problem1a} with $m=2$.

Theorem 4.4. by \cite{jeya1} is the following.
\begin{theorem}(Theorem 4.4. by \cite{jeya1})
    \label{jeyam2}
    Consider a \eqref{QCQP1} such that \autoref{preliminaryassumpt} holds. 
    Assume that $m=2$, $n\ge3$ and $\exists\,\gamma_1,\gamma_2\in\mathbb{R}$ such that $\gamma_1H_1+\gamma_2H_2\succ0$, with $H$ defined as in \eqref{matrix:H}. Then the Fritz-John conditions \eqref{fritzjohn} are necessary for global optimality. Moreover, if $\exists\,x_0\in \mathbb{R}^n$ such that $f_1(x_0)<0$ and $f_2(x_0)<0$, the conditions \eqref{kkt} are necessary and sufficient for global optimality for a point $x^*$ feasible for \eqref{QCQP1}. 
\end{theorem}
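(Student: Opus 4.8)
The plan is to derive this statement directly from \autoref{globalminchar}: the hypotheses ``$m=2$, $n\ge 3$, and $\gamma_1 H_1+\gamma_2 H_2\succ 0$ for some $\gamma_1,\gamma_2\in\mathbb{R}$'' are a concrete sufficient condition for the one abstract ingredient of \autoref{globalminchar} that is not already at hand, namely \autoref{omegaconv}. Indeed, \autoref{preliminaryassumpt}, the existence of a global minimizer $x^*$, and the Slater condition \eqref{slater} --- here reading $f_1(x_0)<0$ and $f_2(x_0)<0$ --- occur verbatim in both statements, so it suffices to show that
$$
\Omega_0=\{(f_0(x),f_1(x),f_2(x))\mid x\in\mathbb{R}^n\}+\text{int}\,\mathbb{R}^{3}_{+},\qquad f_0:=J-J(x^*),
$$
is convex: once this is known, \autoref{globalminchar} yields the Fritz-John necessity of \eqref{fritzjohn} and, under \eqref{slater}, the necessity and sufficiency of \eqref{kkt} for a feasible $x^*$, which is the full assertion. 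Thus the whole proof collapses to the claim that $m=2$, $n\ge 3$, and $\gamma_1 H_1+\gamma_2 H_2\succ 0$ force $\Omega_0$ to be convex.

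To prove that claim I would homogenize. Writing $f_k(x)=q_k(x,1)$ with $q_k(\xi):=\xi^{T}H_k\xi$ for $\xi\in\mathbb{R}^{n+1}$, where $H_0,H_1,H_2$ are the $(n+1)\times(n+1)$ matrices of the shape \eqref{matrix:H} attached to $f_0,f_1,f_2$, one observes that the linear pencil spanned by $H_0,H_1,H_2$ contains the positive definite matrix $0\cdot H_0+\gamma_1 H_1+\gamma_2 H_2$; since there are three forms and the ambient dimension is $n+1\ge 4\ge 3$, the classical convexity theorem for the joint numerical range of three quadratic forms (of Dines--Polyak type; see e.g. \cite{polik1}) applies and shows that $R:=\{(q_0(\xi),q_1(\xi),q_2(\xi))\mid \xi\in\mathbb{R}^{n+1}\}$ is a convex cone. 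Alternatively, since the leading $n\times n$ block $\gamma_1 A_1+\gamma_2 A_2$ of $\gamma_1 H_1+\gamma_2 H_2$ is itself positive definite and $n\ge 3$, one may invoke the nonhomogeneous version of Polyak's theorem directly for $f_0,f_1,f_2$, conclude that $F:=\{(f_0(x),f_1(x),f_2(x))\mid x\in\mathbb{R}^n\}$ is convex, and then note that $\Omega_0=F+\text{int}\,\mathbb{R}^{3}_{+}$ is a Minkowski sum of two convex sets, hence convex.

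Working instead from the homogeneous formulation, the remaining step is the descent from $R$ to $\Omega_0$, which I would carry out along the lines of the proof of \autoref{omega0isconvex}. Given $v,w\in\Omega_0$ and $\lambda\in(0,1)$, write $v=(f_k(x_v))_{k}+p$ and $w=(f_k(x_w))_{k}+q$ with $p,q\in\text{int}\,\mathbb{R}^{3}_{+}$, so that $\lambda v+(1-\lambda)w=\lambda(f_k(x_v))_{k}+(1-\lambda)(f_k(x_w))_{k}+r$ with $r\in\text{int}\,\mathbb{R}^{3}_{+}$. The two pure parts lie in $R$ (via $\xi=(x_v,1)$ and $\xi=(x_w,1)$), so their convex combination lies in $R$ too and equals $(q_k(\xi_0))_{k}$ for some $\xi_0=(\hat{x},t)\in\mathbb{R}^{n+1}$; rescaling $\xi_0$ by $1/|t|$ when $t\ne 0$ (and, when $t=0$, discarding the recession contribution) while absorbing the leftover slack into the $\text{int}\,\mathbb{R}^{3}_{+}$ summand yields a point $\widetilde{x}\in\mathbb{R}^n$ with $f_k(\widetilde{x})\le(\lambda v+(1-\lambda)w)_{k}$ for $k=0,1,2$, so $\lambda v+(1-\lambda)w\in\Omega_0$.

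The only genuine obstacle I anticipate is this convexity step: invoking --- or, for a self-contained treatment, reproving --- the Dines--Polyak convexity of the joint range of the three quadratics, together with the bookkeeping of the homogenizing coordinate $t$ in the descent from $R$ to $\Omega_0$. Everything downstream --- the Fritz-John conditions via \autoref{fermat} applied to the Lagrangian, the upgrade to \eqref{kkt} by ruling out $\gamma_0=0$ under \eqref{slater}, and sufficiency via convexity of the Lagrangian --- is already carried out inside \autoref{globalminchar} and would not need to be repeated.
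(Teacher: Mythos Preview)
The paper does not prove this theorem at all: it is quoted verbatim from \cite{jeya1} (their Theorem~4.4) and stated here only for comparison with \autoref{sqcqpcharcter}. So there is no ``paper's own proof'' to compare against; your proposal is an attempt to reprove a result the paper merely cites. That said, your strategy---reduce to \autoref{globalminchar} by verifying \autoref{omegaconv} via a Polyak--Dines type convexity result for the joint image of three quadratics---is exactly the route taken in \cite{jeya1}, so in spirit you are on the right track.

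There is, however, a genuine gap in the descent from the homogeneous cone $R$ to $\Omega_0$. When you rescale $\xi_0=(\hat{x},t)$ by $1/|t|$, the values $q_k(\xi_0)$ get divided by $t^{2}$, so $f_k(\hat{x}/t)=q_k(\xi_0)/t^{2}$. You need $f_k(\hat{x}/t)<(\lambda v+(1-\lambda)w)_k$, but you only know $q_k(\xi_0)=\lambda f_k(x_v)+(1-\lambda)f_k(x_w)<(\lambda v+(1-\lambda)w)_k$. If $t^{2}>1$ and both sides of the last inequality are negative, dividing the left side by $t^{2}$ moves it \emph{toward} zero and can overshoot the right side; ``absorbing the slack into $\text{int}\,\mathbb{R}^3_+$'' does not repair this, because the slack may be negative. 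The $t=0$ case (``discarding the recession contribution'') is likewise not an argument.

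The clean way around this---and, I believe, what \cite{jeya1} actually does---is to bypass the convexity of $\Omega_0$ and prove the alternative theorem (the conclusion of \autoref{opSlemma}) directly from the convexity of $R$. Namely, show $R\cap(-\text{int}\,\mathbb{R}^3_+)=\emptyset$: if $q_k(\hat{x},t)<0$ for all $k$ with $t\neq0$, then $f_k(\hat{x}/t)<0$ for all $k$, contradicting the hypothesis; if $t=0$ and $\hat{x}\neq0$, then $\hat{x}^{T}A_k\hat{x}<0$ for all $k$, and by continuity $q_k(\hat{x},t)<0$ persists for small $t\neq0$, reducing to the previous case. Then separate $R$ from $-\text{int}\,\mathbb{R}^3_+$ and read off $(\gamma_0,\gamma_1,\gamma_2)\in\mathbb{R}^3_+\setminus\{0\}$ with $\sum_k\gamma_k q_k\geq0$, hence $\sum_k\gamma_k f_k\geq0$. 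From there your final paragraph applies unchanged. Your ``alternative'' of invoking a ready-made nonhomogeneous Polyak theorem for three quadratics is not standard; the homogenization-plus-continuity argument above is what makes the $n\ge3$ hypothesis do its work.
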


\begin{remark}
For \eqref{QCQP2} with $m=2$, \autoref{jeyam2} and \autoref{sqcqpcharcter} provide the same result under different assumptions. Hence, when $m=2$, we can replace the assumption $n>m+1=3$ in \autoref{sqcqpcharcter} with $n\ge3$ and $\exists\,\gamma_1,\gamma_2\in\mathbb{R}$ such that $\gamma_1H_1+\gamma_2H_2\succ0$, i.e. 
$$
\exists\,\gamma_1,\gamma_2\in\mathbb{R} \text{ s.t. } (\gamma_1a_1+\gamma_2a_2)>0,\ \ (\gamma_1c_1+\gamma_2c_2)(\gamma_1a_1+\gamma_2a_2)-\|\gamma_1b_1+\gamma_2b_2\|^2>0.
$$
\qed
\end{remark}

Consider an SD \eqref{QCQP1} with $m=2$.
In \cite{ben1}, it is shown that if there exists a KKT point $(\bar{x},\bar{y},\bar{\gamma_1},\bar{\gamma_2},\bar{\nu}_i\ \ i=1,...,n)$ such that only one between $\bar{\gamma_1},\bar{\gamma_2}$ is strictly greater than $0$, then \eqref{benSDP} is exact. Also, it is possible to recover the global solution $x^*$ of the SD \eqref{QCQP1} with $m=2$ with simple calculations.
Paper \cite{locatelli2} takes the result of \cite{ben1} as starting point and provides the conditions such that \eqref{benSDP} is exact for SD \eqref{QCQP1} with $m=2$ and $\bar{\gamma_1}>0,\bar{\gamma_2}>0$. 
However, the following example from \cite{locatelli2} shows that the number $n$ of dimensions of the space can still be an issue.
\begin{example}(Example 3.1 from \cite{locatelli2})
Consider the following problem \eqref{QCQP2} problem.
\begin{equation}
    Minimize_{x\in\mathbb{R}}\,-x^2+x\ \ s.t.\ \ x^2\le1\ \ -x\le0
\end{equation}
Note that \eqref{benSDP} is not exact.
\end{example}

\subsection{Solving (KKT) conditions for \eqref{problem1a} with two constraints}

In the present subsection we solve  the problem of finding a global solution $x^{*}$ to \eqref{problem1a} with the help \eqref{kktp9}, by 
direct inspection of  all possible configurations $\gamma_{i}\neq 0$, $i\in I$, 
$\gamma_{i}= 0$, $i\in \{1,2\}\setminus I$, where $I\subseteq\{1,2\}$ is a subset of $\{1,2\}$. 
We are interested in the case such that $n >> 3$.

The number of all possible configuration is $4$, with 
\begin{equation*}
    I\in\{ \emptyset,\ \{1\},\ \{2\},\ \{1,2\}\}
\end{equation*}
Assume $\mathbf{0}_n$ is not feasible. Then we can exclude the case where $I=\emptyset$.

\subsubsection{$I$ is a single element set}
We start with the case where $I$ is a single element set.
Without loss of generality, set $I=\{1\}$ and $\gamma_1>0,\gamma_2=0$.  

 Conditions (1.)-(4.) of \eqref{kktp9} take the form
\begin{equation}
    \label{kktp11}
    \begin{split}
        &(1.)\  w:=1+2\gamma_1 a_1\ge0,\ \ \text{dual feasibility}\\
        &(2.)\ f_{2}(x^{*})=a_{2}\|x^{*}\|^{2}+b_{2}^{T} x^{*}+c_{2}\le 0 \text{primal feasibility}\\
        &(3.)\  f_{1}(x^{*})=a_{1}\|x^{*}\|^{2}+b_{1}^{T} x^{*}+c_{1}=0 \ \ \text{complementarity}\\
         &(4.)\ \gamma_1 b_{1}=-x^{*}w\ \text{stationarity}\\
    \end{split}
    \end{equation}
    
 
\begin{theorem}
\label{w01con}
    Let $\gamma_1>0, \gamma_2=0$.
    Let $w=0$, i.e. $\gamma_1=-\frac{1}{2a_1}>0$. If $b_{1}=0$ and there exists a solution $x^*$ of the system 
    
\begin{equation}
\label{w01consyst}
\left\{
\begin{aligned}
    &\|x^*\|^2=-\frac{c_{1}}{a_{1}}\\
& b_2^Tx^*\le a_{2}\frac{c_{1}}{a_{1}} -c_{2}
\end{aligned}
\right.
\end{equation}

Then $x^*$ is a global solution of \eqref{problem1a}.
\end{theorem}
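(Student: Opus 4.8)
The plan is to verify that the point $x^{*}$ furnished by the hypothesis, paired with the multipliers $\gamma_{1}=-\tfrac{1}{2a_{1}}$ and $\gamma_{2}=0$, solves the KKT system in the configuration $I=\{1\}$, that is, the conditions \eqref{kktp11}. Once this is checked, we conclude by the characterization set up at the beginning of this section: since $J$ is strongly convex (so \eqref{problem1a} has a global minimizer), $m+1<n$ (so $\Omega_{0}$ is convex by \autoref{omega0isconvex}), and a Slater point $x_{0}$ is assumed to exist, \autoref{globalminchar} tells us that a feasible point of \eqref{problem1a} is globally optimal if and only if \eqref{kkt} — equivalently \eqref{kktp9} — holds. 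Thus exhibiting admissible multipliers for $x^{*}$ finishes the proof.

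First I would record that the choice of multipliers is legitimate: $\gamma_{1}=-\tfrac{1}{2a_{1}}>0$ (this is exactly what $w=0$ encodes, and it forces $a_{1}<0$), while $\gamma_{2}=0$, so $(\gamma_{1},\gamma_{2})\in\mathbb{R}^{2}_{+}\setminus\mathbf{0}_{2}$. With these values $w=1+2\gamma_{1}a_{1}=0\ge 0$, which is condition $(1.)$ of \eqref{kktp11}.

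Next I would substitute $x^{*}$ into the remaining three conditions. Using the first equation of \eqref{w01consyst}, $\|x^{*}\|^{2}=-c_{1}/a_{1}$, together with $b_{1}=0$, one gets $f_{1}(x^{*})=a_{1}\|x^{*}\|^{2}+b_{1}^{T}x^{*}+c_{1}=a_{1}(-c_{1}/a_{1})+c_{1}=0$; this simultaneously yields primal feasibility of the first constraint and the complementarity condition $(3.)$ (recall $\gamma_{1}>0$). For condition $(2.)$, substituting $\|x^{*}\|^{2}=-c_{1}/a_{1}$ into $f_{2}$ gives $f_{2}(x^{*})=-a_{2}c_{1}/a_{1}+b_{2}^{T}x^{*}+c_{2}$, and the second line of \eqref{w01consyst}, $b_{2}^{T}x^{*}\le a_{2}c_{1}/a_{1}-c_{2}$, forces $f_{2}(x^{*})\le 0$. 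Finally, since $b_{1}=0$ and $w=0$, the stationarity condition $(4.)$ reads $\gamma_{1}b_{1}=0=-x^{*}w$, which holds. Hence $x^{*}$ is feasible for \eqref{problem1a} and all of \eqref{kktp9} is satisfied, so $x^{*}$ is a global solution of \eqref{problem1a} by \autoref{globalminchar}.

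I do not expect a genuine obstacle: the content of the statement is essentially the observation that the algebraic system \eqref{w01consyst} is precisely the reduction of the KKT system under the degenerate configuration $w=0$, $b_{1}=0$, $\gamma_{2}=0$. The only points requiring care are the bookkeeping — matching each line of \eqref{w01consyst} with the correct KKT condition — and confirming that the hypotheses needed to invoke the sufficiency half of \autoref{globalminchar} (convexity of $\Omega_{0}$, solvability of \eqref{problem1a}, the Slater condition) are all in force, which was arranged in the preamble of this section; note also that solvability of \eqref{w01consyst} tacitly requires $-c_{1}/a_{1}\ge 0$, and this is subsumed in the hypothesis that a solution $x^{*}$ exists.
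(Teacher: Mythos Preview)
Your proposal is correct and follows essentially the same approach as the paper: reduce the KKT system \eqref{kktp11} under the degenerate configuration $w=0$, $b_{1}=0$, $\gamma_{2}=0$ to the system \eqref{w01consyst}, then conclude global optimality from the KKT characterization established earlier. If anything, your write-up is more explicit than the paper's—the paper simply records that \eqref{kktp11} collapses to \eqref{kktp110} and leaves the invocation of \autoref{sqcqpcharcter} implicit, whereas you verify each condition (including stationarity $(4.)$) and spell out why the sufficiency theorem applies.
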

 
\begin{proof}
Consider the case $w=0$. By (1.) of \eqref{kktp11}, if $a_{1}\ge0$, then $w=0$ is impossible. Hence, the case $w=0$ could appear only if $a_{1}<0$. Moreover,
 $$
 \|x^{*}\|^{2}=-\frac{c_{1}}{a_{1}}.
 $$
 We must have $b_{1}=0$, not to generate contradiction in $(4.)$.

The (KKT) system \eqref{kktp11} takes the form
\begin{equation}
    \label{kktp110}
    \begin{split}
        &(1.)\  w:=1+2\gamma_1 a_1=0\Rightarrow \gamma_1=-\frac{1}{2a_1}>0,\ \ \text{dual feasibility}\\
        &(2.)\ b_2^Tx^*\le a_{2}\frac{c_{1}}{a_{1}} -c_{2}\ \text{primal feasibility}\\
        &(3.)\   \|x^{*}\|^{2}=-\frac{c_{1}}{a_{1}}\ \text{complementarity}
    \end{split}
    \end{equation} 
\end{proof} 

The next remark is about the feasibility of system \eqref{w01consyst}. 
\begin{remark}
Consider the linear constraint $ b_2^Tx^*\le a_{2}\frac{c_{1}}{a_{1}} -c_{2}$. 

By assumption, $x=\mathbf{0}_n$ does not belong to the feasible set. 
Hence, if \eqref{w01consyst} is feasible, then the hyperplane 
$$ b_2^Tx= a_{2}\frac{c_{1}}{a_{1}} -c_{2},$$
must intersect or be tangent to the $n$-dimensional sphere $\|x\|^2=-\frac{c_1}{a_1}$.

We can find the least square solution $\bar{x}$ of $ b_2^Tx^*\le a_{2}\frac{c_{1}}{a_{1}} -c_{2}$.
If 
$$\|\bar{x}\|^2\le \|x^{*}\|^{2}=-\frac{c_{i}}{a_{i}}$$
then there exists a solution $x^*$ which satisfies \eqref{w01consyst}.
\qed
\end{remark}

Let us now consider the case $w>0$. 
The case $w>0$ is simpler than the case $w=0$ considered in \autoref{w01con}.
In fact, by (4.) 
$$ 
 (x^{*})^{T}x^{*}=\frac{\gamma_{i}^{2}}{w^{2}}b^{T}_{1}b_{1},
 $$
 and by the complementarity condition (3.),
\begin{equation}
\label{one-active}
 f_{1}(x^{*})=a_{1}\frac{\gamma_{1}^{2}}{w^{2}}b^{T}_{1}b_{1}-\frac{1}{w}\gamma_{1}b_{1}^{T} b_{1}+c_{1}=0
\end{equation}
$$
\Delta=\frac{(b_{1}^{T}b_{1})^{2}}{w^2}-4a_{1}c_{1}\frac{b_{1}^{T}b_{1}}{w^{2}}=\frac{b_{1}^{T}b_{1}}{w^{2}}[b_{1}^{T}b_{1}-4a_{1}c_{1}]
$$
If $[b_{1}^{T}b_{1}-4a_{1}c_{1}]\ge 0$, then \eqref{one-active}, has two (or a double) roots: $\gamma_1^1$ and $\gamma_1^2$. 
If at least one of them is positive and it is such that $w>0$, then it satisfies the dual feasibility conditions (1.) and
$$
x^{*}=\frac{-1}{w} \gamma_{1} b_{1}
$$
is a candidate for a solution to \eqref{problem1a} provided it satisfies the primal feasibility conditions  (2.) of \eqref{kktp11}.

Additionally, observe that if $a_{1}=0$, then $w=1$ and \eqref{one-active} takes the form
\begin{equation}
\label{one-active1}
c_{1}=\gamma_{1}b_{1}^{T} b_{1}, \ \ \gamma_{1}=\frac{b_{1}^{T} b_{1}}{c_{1}}.
\end{equation}
Notice that by \eqref{one-active1} $c_{1}>0$. 
Otherwise, there is no positive $\gamma_1$ which satisfies \eqref{one-active}.

\subsubsection{$I$ is not a single element set}
Consider the following theorem
\begin{theorem}
\label{gamma12pos}
Assume that $\mathbf{0}_n$ is not feasible and there does not exist a KKT point such that $I=\{1\}$ or $I=\{2\}$. Then 
there exists a (KKT) point such that $I=\{1,2\}$.
\end{theorem}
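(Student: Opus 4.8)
The plan is to prove the statement by a pigeonhole argument over the four a priori configurations of the active-multiplier set. The key point is that \eqref{problem1a} has a global minimizer and that, by \autoref{globalminchar}, this minimizer comes equipped with a KKT multiplier; once one such KKT point is in hand, the hypotheses leave $I=\{1,2\}$ as the only admissible configuration. Throughout, for a feasible $x$ and a multiplier $\gamma=(\gamma_1,\gamma_2)\in\mathbb{R}^2_+$ satisfying \eqref{kktp9} at $x$, I write $I(\gamma):=\{k\in\{1,2\}\mid \gamma_k>0\}$ and call $(x,\gamma)$ a KKT point with index set $I(\gamma)$.

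First I would recall that \eqref{problem1a} is solvable: with $z=\mathbf{0}_n$ the objective $J(x)=\tfrac12\|x\|^2$ is continuous and coercive, $A$ is closed, and $A\neq\emptyset$ by the Slater hypothesis standing in this section, so a global minimizer $x^{*}\in A$ exists. The hypotheses of \autoref{globalminchar} are satisfied for \eqref{problem1a}: \autoref{preliminaryassumpt} holds because $A_J=\tfrac12 I\succeq 0$, so that $A_J+\gamma_1A_1+\gamma_2A_2=(\tfrac12+\gamma_1a_1+\gamma_2a_2)I\succ 0$ for $\gamma_1,\gamma_2>0$ small enough; the Slater point $x_0$ with $f_k(x_0)<0$ is assumed; and $\Omega_0$ is convex by \autoref{omega0isconvex} since $m+1=3<n$, i.e.\ \autoref{omegaconv} holds. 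Hence $x^{*}$ satisfies the \eqref{kkt} conditions, which for \eqref{problem1a} with $z=\mathbf{0}_n$ are exactly \eqref{kktp9}, and \autoref{globalminchar} supplies a multiplier $\gamma\in\mathbb{R}^2_+\backslash\mathbf{0}_2$. Thus $(x^{*},\gamma)$ is a KKT point with $I(\gamma)\neq\emptyset$.

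It remains to rule out $I(\gamma)\in\{\emptyset,\{1\},\{2\}\}$. The cases $\{1\}$ and $\{2\}$ are excluded directly by the hypothesis that no KKT point has index set $\{1\}$ or $\{2\}$. The case $\emptyset$ is excluded by the infeasibility of $\mathbf{0}_n$: if $\gamma=\mathbf{0}_2$ then $w=1+2\sum_k\gamma_ka_k=1$ and the stationarity relation (4.) of \eqref{kktp9} becomes $\mathbf{0}_n=\sum_k\gamma_kb_k=-x^{*}w=-x^{*}$, forcing $x^{*}=\mathbf{0}_n\notin A$, a contradiction with primal feasibility. Hence $I(\gamma)=\{1,2\}$, so $(x^{*},\gamma)$ is the desired KKT point with $I=\{1,2\}$. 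I do not foresee a genuine obstacle here: the argument is a finite case split built entirely on results already established, and the only step calling for a short verification is that \autoref{globalminchar} applies to \eqref{problem1a}, which reduces to the three routine checks above.
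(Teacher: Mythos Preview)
Your argument is correct and follows essentially the same route as the paper: existence of a global minimizer for \eqref{problem1a}, necessity of the KKT conditions via the S-lemma machinery (the paper cites \autoref{sqcqpcharcter} directly, while you unpack it into \autoref{omega0isconvex} plus \autoref{globalminchar}), and then the pigeonhole over the four configurations of $I$. Your explicit verification that $I=\emptyset$ forces $x^{*}=\mathbf{0}_n$ via stationarity is a useful addition, since it makes the exclusion of that case independent of whether the multiplier supplied by \autoref{globalminchar} is a priori nonzero.
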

\begin{proof}
Since $\mathbf{0}_n$ is not feasible, there does not exist a KKT point such that $I=\emptyset$. By assumption there does not exist a KKT point such that $I=\{1\}$ or $I=\{2\}$. 
On the other side, an optimal point of \eqref{problem1a} must exist since the Hessian of the objective function is positive definite and the KKT conditions \eqref{kktp9} are necessary and sufficient for optimality by  \autoref{sqcqpcharcter}. 
Hence there exists a KKT point such that $I=\{1,2\}$.
\end{proof}

 Conditions (1.)-(4.) of \eqref{kktp9} take the following form. There exist $\gamma_1\ge0,\gamma_2\ge0$ not both null such that:
\begin{equation}
    \label{kktp011}
    \begin{split}
        &(1.)\  w:=1+2\gamma_1 a_1+2\gamma_2 a_2\ge0,\ \ \text{dual feasibility}\\
        &(2.)\  f_{i}(x^{*})=a_{i}\|x^{*}\|^{2}+b_{i}^{T} x^{*}+c_{i}=0 \ \ i+1,2\ \text{complementarity}\\
         &(3.)\ \gamma_1 b_{1}+\gamma_2b_2=-x^{*}w\ \text{stationarity}
    \end{split}
    \end{equation}

\begin{assumption}
\label{IND}
Assume that the vectors $b_1,b_2$ of the active constraints are linearly independent.
\end{assumption}

Observe that, whenever $n$ is very big, the \autoref{IND} is not very restrictive and it is likely to be satisfied automatically.

\begin{theorem}
    \label{wposanynumberconst1}
    Consider problem \eqref{problem1a} satisfying \autoref{IND} such that $\mathbf{0}_m$ is not feasible for \eqref{problem1a}. Assume that $a_1\neq0$.
    Let $I=\{1,2\}$ and $n>3$.
 Consider the following  system  of one quadratic polynomial and one linear equations with respect to the variables $\gamma_1,\gamma_2$.
    
    \begin{equation}
        \label{systemof1}
        \begin{aligned}
        &(1.)\ \ \begin{cases}
          &\frac{1}{a_1w}b_1^T(\gamma_1 b_1+\gamma_2 b_2)-\frac{c_1}{a_1}=\frac{1}{a_2w}b_2^T(\gamma_1 b_1+\gamma_2 b_2)-\frac{c_2}{a_2} \text{ if } a_2\neq0\\
        &\frac{1}{w}b_2^T(\gamma_1 b_1+\gamma_2 b_2)+c_2=0\text{ if } a_2=0
          \end{cases}\\
            &(2.)\ \ 
\begin{aligned}
    &\sum\limits_{i\in I}\gamma_i^2[\|b_i\|^2-2\frac{a_i}{a_1}b_1^T b_i+4\frac{c_1}{a_1}a_i^2]+\sum\limits_{i\in I} \gamma_i[4a_i\frac{c_1}{a_1}-\frac{b_1^T b_i}{a_1}]-\\
    &\hspace{4em}-2\gamma_1\gamma_2(\frac{a_2}{a_1}\|b_1\|^2+4a_1a_2)+\frac{c_1}{a_1}=0
\end{aligned}
        \end{aligned}
    \end{equation}
    
    If there exist $\gamma_1,\gamma_2> 0$, 
   not both zero, which solve \eqref{systemof1}, and 
     \begin{equation} 
     \label{wformula1}
     w=1+2\gamma_1 a_1+2\gamma_2 a_2>0
     \end{equation}
   then 
   $$x^{*}=-\frac{1}{w}(\gamma_1 b_1+\gamma_2 b_2)$$
    is a global solution of \eqref{problem1a}.
\end{theorem}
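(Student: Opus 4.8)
The plan is to deduce the statement from the optimality characterization already available for \eqref{problem1a}: by \autoref{globalminchar} (whose hypotheses are met here, since $m+1<n$, the Slater point $x_0$ is assumed to exist, and \eqref{problem1a} is solvable because $J$ is strongly convex), a point feasible for \eqref{problem1a} is a global minimizer if and only if the system \eqref{kkt} — equivalently \eqref{kktp9} — holds for some multiplier in $\mathbb{R}^m_+\setminus\mathbf{0}_m$. As $m=2$, it therefore suffices to check that, with $\gamma_1,\gamma_2>0$ solving \eqref{systemof1}, the choices $w:=1+2\gamma_1a_1+2\gamma_2a_2$ (so $w>0$ by \eqref{wformula1}) and $x^*:=-\frac{1}{w}(\gamma_1b_1+\gamma_2b_2)$ satisfy all of \eqref{kktp9} in the configuration $I=\{1,2\}$, i.e. \eqref{kktp011}.

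First, dual feasibility (item (1.)) is exactly \eqref{wformula1}, and the stationarity condition (item (3.) of \eqref{kktp011}) holds by construction, since $x^*w=-(\gamma_1b_1+\gamma_2b_2)$. The core of the argument is to prove complementarity together with primal feasibility, that is $f_1(x^*)=f_2(x^*)=0$: because $f_1,f_2$ are the only constraints, this simultaneously yields $x^*\in A$ and makes $\gamma_kf_k(x^*)=0$ automatic, so that $(\gamma_1,\gamma_2)\in\mathbb{R}^2_+\setminus\mathbf{0}_2$ is the multiplier required by \autoref{globalminchar}.

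To obtain $f_1(x^*)=0$, I would substitute $x^*=-\frac{1}{w}(\gamma_1b_1+\gamma_2b_2)$, giving
\[
f_1(x^*)=\frac{a_1}{w^2}\,\|\gamma_1b_1+\gamma_2b_2\|^2-\frac{1}{w}\,b_1^{T}(\gamma_1b_1+\gamma_2b_2)+c_1 .
\]
Since $w>0$ and $a_1\neq0$, this vanishes if and only if $w^2f_1(x^*)/a_1=0$; expanding that expression with $w=1+2\gamma_1a_1+2\gamma_2a_2$ and collecting the monomials $\gamma_1^2,\gamma_2^2,\gamma_1\gamma_2,\gamma_1,\gamma_2$ and the constant term should reproduce equation (2.) of \eqref{systemof1}, so that equation (2.) is precisely the statement $f_1(x^*)=0$ — and hence $\|x^*\|^2=\frac{1}{a_1w}b_1^{T}(\gamma_1b_1+\gamma_2b_2)-\frac{c_1}{a_1}$. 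To then obtain $f_2(x^*)=0$: if $a_2\neq0$, equation (1.) of \eqref{systemof1} equates this value of $\|x^*\|^2$ with $\frac{1}{a_2w}b_2^{T}(\gamma_1b_1+\gamma_2b_2)-\frac{c_2}{a_2}$, and, using $b_2^{T}x^*=-\frac{1}{w}b_2^{T}(\gamma_1b_1+\gamma_2b_2)$, this rearranges to $a_2\|x^*\|^2+b_2^{T}x^*+c_2=0$; if $a_2=0$, then $f_2$ is affine and equation (1.) of \eqref{systemof1} reads $f_2(x^*)=0$ directly. Thus \eqref{kktp011}, hence \eqref{kkt}, holds at the feasible point $x^*$, and the sufficiency part of \autoref{globalminchar} (see also \autoref{sqcqpcharcter}) gives that $x^*$ is a global solution of \eqref{problem1a}.

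I expect the only non-routine step to be the termwise verification that equation (2.) of \eqref{systemof1} coincides with $w^2f_1(x^*)/a_1=0$ after the substitution of $w$; this is bookkeeping, but it must respect the sign conventions of \eqref{kktp9}, and it is where the hypotheses $a_1\neq0$ and $w>0$ enter (to clear the denominators $1/w$, $1/w^2$, $1/a_1$), while the standing \autoref{IND} together with $n>3$ serves to keep the configuration $I=\{1,2\}$ nondegenerate — in particular it forces $\gamma_1b_1+\gamma_2b_2\neq0$, so $x^*\neq\mathbf{0}_n$, which is consistent with the assumed infeasibility of the origin. No separation or S-lemma argument is invoked directly, that work having already been carried out inside \autoref{globalminchar}.
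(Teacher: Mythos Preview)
Your proposal is correct and follows essentially the same route as the paper: substitute $x^{*}=-\tfrac{1}{w}(\gamma_1 b_1+\gamma_2 b_2)$ into the complementarity conditions of \eqref{kktp011}, identify the resulting equations with the two equations of system \eqref{systemof1} (equation (2.) being $f_1(x^*)=0$ after clearing $w^2$ and $a_1$, equation (1.) being $f_2(x^*)=0$ once $\|x^*\|^2$ is expressed via $f_1(x^*)=0$), and then conclude by the sufficiency part of the KKT characterization. The only cosmetic difference is direction: the paper derives \eqref{systemof1} from \eqref{kktp011}, whereas you verify \eqref{kktp011} from \eqref{systemof1}; your direction is in fact the one the theorem statement demands, so your explicit appeal to the sufficiency in \autoref{globalminchar}/\autoref{sqcqpcharcter} closes the argument more transparently.
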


\begin{proof}
     By the fact that $0$ is not feasible for \eqref{problem1a}, \autoref{IND} implies that the condition (1.) of \eqref{kktp011} for any index set $I$ become $w>0$. Indeed, if it were $w=0$, then $0$-vector would be represented as a linear nonzero combination  of vectors $b_{1},b_2$ by (3.)of \eqref{kktp011}, which would imply that $b_{1}$ and $b_2$ are linearly dependent, contrary to \autoref{IND}.
     
          By the stationarity condition (3.) of \eqref{kktp011},
\begin{equation}
    \label{xandxsquare1}
    x^{*}=-\frac{1}{w}(\gamma_1 b_1+\gamma_2 b_2),\ \
    (x^{*})^{T}x^{*}=\frac{1}{w^{2}}(\gamma_1 b_1+\gamma_2 b_2)^{T}(\gamma_1 b_1+\gamma_2 b_2)
\end{equation}

Focus on the complementarity, (2.) of \eqref{kktp011}. Since $a_1\neq0$ by assumption, we have 
\begin{equation}
    \label{ll1}
    \begin{split}
        &f_1(x^*)=a_1\|x^*\|^2+b_1^T{x^*}+c_1=0\\
        &\|x^*\|^2=-\frac{b_1^T{x^*}}{a_1}-\frac{c_1}{a_1}\\
        &\|x^*\|^2=\frac{1}{a_1w}b_1^T(\gamma_1 b_1+\gamma_2 b_2)-\frac{c_1}{a_1}
    \end{split}
\end{equation}

Plugging \eqref{ll1} into the complementarity condition (2.), we have

\begin{equation}
    \label{ncomplementarity1}
     \begin{split}
        &(1.)\ \|x^*\|^2=\frac{1}{a_1w}b_1^T(\gamma_1 b_1+\gamma_2 b_2)-\frac{c_1}{a_1}\\
        &(2.)\ \begin{cases}
        &\frac{1}{a_1w}b_1^T(\gamma_1 b_1+\gamma_2 b_2)-\frac{c_1}{a_1}=\frac{1}{a_2w}b_2^T(\gamma_1 b_1+\gamma_2 b_2)-\frac{c_2}{a_2} \text{ if } a_2\neq0\\
        &\frac{1}{w}b_2^T(\gamma_1 b_1+\gamma_2 b_2)+c_2=0\text{ if } a_2=0
        \end{cases}
    \end{split}
\end{equation}
Equation (1.) of system \eqref{ncomplementarity1}, can be rewritten as
\begin{equation*}
    \begin{split}
    &\|x^*\|^2=\frac{1}{a_1w}b_1^T(\gamma_1 b_1+\gamma_2 b_2)-\frac{c_1}{a_1}\\
        &\frac{1}{w^{2}}(\gamma_1 b_1+\gamma_2 b_2)^{T}(\gamma_1 b_1+\gamma_2 b_2)=\frac{1}{a_1w}b_1^T(\gamma_1 b_1+\gamma_2 b_2)-\frac{c_1}{a_1}\\
        &\sum\limits_{i\in I}\gamma_i^2\|b_i\|^2+2\gamma_1\gamma_2b_1^Tb_2=\frac{w}{a_1}b_1^T(\gamma_1 b_1+\gamma_2 b_2)-w^2\frac{c_1}{a_1}\\
        &...=\frac{1}{a_1}b_1^T(\gamma_1 b_1+\gamma_2 b_2)+2\gamma_1b_1^T(\gamma_1 b_1+\gamma_2 b_2)+2\frac{a_2}{a_1}\gamma_2b_1^T(\gamma_1 b_1+\gamma_2 b_2) -w^2\frac{c_1}{a_1}\\
        &\sum\limits_{i\in I}\gamma_i^2[\|b_i\|^2-2\frac{a_i}{a_1}b_1^T b_i]-2\gamma_1\gamma_2\frac{a_2}{a_1}\|b_1\|^2 =\frac{1}{a_1}b_1^T[\sum\limits_{i\in I} \gamma_i b_i]-w^2\frac{c_1}{a_1}
    \end{split}
\end{equation*}

Moreover,

\begin{equation*}
    w^2=(1+2\gamma_1a_1+2\gamma_2a_2)^2=1+4\sum\limits_{i\in I}\gamma_i^2a_i^2+8\gamma_1\gamma_2a_1a_2+4\sum\limits_{i\in I}a_i\gamma_i
\end{equation*}

The polynomial quadratic equation becomes

\begin{equation}
\label{polyquad1}
\begin{split}
    &\sum\limits_{i\in I}\gamma_i^2[\|b_i\|^2-2\frac{a_i}{a_1}b_1^T b_i+4\frac{c_1}{a_1}a_i^2]+\sum\limits_{i\in I} \gamma_i[4a_i\frac{c_1}{a_1}-\frac{b_1^T b_i}{a_1}]-\\
    &\hspace{4em}-2\gamma_1\gamma_2(\frac{a_2}{a_1}\|b_1\|^2+4a_1a_2)+\frac{c_1}{a_1}=0
\end{split}
\end{equation}
\end{proof}

\section{Conclusions}

We provide necessary and sufficient (KKT) conditions for global optimality for a new class of possibly nonconvex quadratically constrained quadratic programming (QCQP) problems, denoted by \eqref{QCQP2}. 
Our result relies on a generalized version of the S-Lemma, \autoref{opSlemma}, stated in the context of general \eqref{QCQP1} problems. 
Hence, we believe that this paper can also be the groundwork for providing conditions for global optimality for other classes of \eqref{QCQP1}.
In section 5, we prove the exactness of the SDP and the SOCP relaxations for \eqref{QCQP2}. 
In section 6, we solve the (KKT) condition for S-QCQP when the number of constraints $m=2$ and $n >> m$.

\bibliographystyle{plain}
\bibliography{blankbib.bib}  

\begin{thebibliography}{10}

\bibitem{Ali1}
Farid Alizadeh and Donald Goldfarb.
\newblock Second-order cone programming.
\newblock {\em Mathematical programming}, 95(1):3--51, 2003.

\bibitem{Anstrei1}
Kurt~M Anstreicher.
\newblock Semidefinite programming versus the reformulation-linearization
  technique for nonconvex quadratically constrained quadratic programming.
\newblock {\em Journal of Global Optimization}, 43(2):471--484, 2009.

\bibitem{bao1}
Xiaowei Bao, Nikolaos~V Sahinidis, and Mohit Tawarmalani.
\newblock Semidefinite relaxations for quadratically constrained quadratic
  programming: A review and comparisons.
\newblock {\em Mathematical programming}, 129(1):129--157, 2011.

\bibitem{barro1}
Moussa Barro, Ali Ouedraogo, and Sado Traore.
\newblock Global optimality condition for quadratic optimization problems under
  data uncertainty.
\newblock {\em Positivity}, 25(3):1027--1044, 2021.

\bibitem{CMS}
Heinz~H Bauschke, Patrick~L Combettes, et~al.
\newblock {\em Convex analysis and monotone operator theory in Hilbert spaces},
  volume 408.
\newblock Springer, 2011.

\bibitem{ben1}
Aharon Ben-Tal and Dick Den~Hertog.
\newblock Hidden conic quadratic representation of some nonconvex quadratic
  optimization problems.
\newblock {\em Mathematical Programming}, 143(1):1--29, 2014.

\bibitem{burer2}
Samuel Burer and Dieter Vandenbussche.
\newblock A finite branch-and-bound algorithm for nonconvex quadratic
  programming via semidefinite relaxations.
\newblock {\em Mathematical Programming}, 113(2):259--282, 2008.

\bibitem{burer1}
Samuel Burer and Yinyu Ye.
\newblock Exact semidefinite formulations for a class of (random and
  non-random) nonconvex quadratic programs.
\newblock {\em Mathematical Programming}, 181(1):1--17, 2020.

\bibitem{Doma1}
Alexander Domahidi, Eric Chu, and Stephen Boyd.
\newblock Ecos: An socp solver for embedded systems.
\newblock In {\em 2013 European Control Conference (ECC)}, pages 3071--3076.
  IEEE, 2013.

\bibitem{ello1}
Sourour Elloumi and Am{\'e}lie Lambert.
\newblock Global solution of non-convex quadratically constrained quadratic
  programs.
\newblock {\em Optimization methods and software}, 34(1):98--114, 2019.

\bibitem{fradkov1}
AL~Fradkov and VA~Yakubovich.
\newblock Thes-procedure and duality relations in nonconvex problems of
  quadratic programming.
\newblock {\em Vestn. LGU, Ser. Mat., Mekh., Astron,(1)}, pages 101--109, 1979.

\bibitem{furini1}
Fabio Furini, Emiliano Traversi, Pietro Belotti, Antonio Frangioni, Ambros
  Gleixner, Nick Gould, Leo Liberti, Andrea Lodi, Ruth Misener, Hans
  Mittelmann, et~al.
\newblock Qplib: a library of quadratic programming instances.
\newblock {\em Mathematical Programming Computation}, 11(2):237--265, 2019.

\bibitem{gon1}
Jacek Gondzio and E~Alper Y{\i}ld{\i}r{\i}m.
\newblock Global solutions of nonconvex standard quadratic programs via mixed
  integer linear programming reformulations.
\newblock {\em Journal of Global Optimization}, 81(2):293--321, 2021.

\bibitem{gusev1}
Sergei~V Gusev and Andrey~L Likhtarnikov.
\newblock Kalman-popov-yakubovich lemma and the s-procedure: A historical
  essay.
\newblock {\em Automation and Remote Control}, 67(11):1768--1810, 2006.

\bibitem{jeya1}
Vaithilingam Jeyakumar, Gue~Myung Lee, and Guoyin~Y Li.
\newblock Alternative theorems for quadratic inequality systems and global
  quadratic optimization.
\newblock {\em SIAM Journal on Optimization}, 20(2):983--1001, 2009.

\bibitem{jeya2}
Vaithilingam Jeyakumar, Alex~M Rubinov, and Zhi-You Wu.
\newblock Non-convex quadratic minimization problems with quadratic
  constraints: global optimality conditions.
\newblock {\em Mathematical programming}, 110(3):521--541, 2007.

\bibitem{Kim1}
Sunyoung Kim and Masakazu Kojima.
\newblock Exact solutions of some nonconvex quadratic optimization problems via
  sdp and socp relaxations.
\newblock {\em Computational optimization and applications}, 26(2):143--154,
  2003.

\bibitem{Lobo1}
Miguel~Sousa Lobo, Lieven Vandenberghe, Stephen Boyd, and Herve Lebret.
\newblock Second-order cone programming.
\newblock {\em Linear algebra and Applications}, 284:193--228, 1998.

\bibitem{locatelli1}
Marco Locatelli.
\newblock Some results for quadratic problems with one or two quadratic
  constraints.
\newblock {\em Operations Research Letters}, 43(2):126--131, 2015.

\bibitem{locatelli2}
Marco Locatelli.
\newblock Exactness conditions for an sdp relaxation of the extended trust
  region problem.
\newblock {\em Optimization Letters}, 10(6):1141--1151, 2016.

\bibitem{loc0}
Marco Locatelli and Fabio Schoen.
\newblock {\em Global optimization: theory, algorithms, and applications}.
\newblock SIAM, 2013.

\bibitem{lucidi1}
Stefano Lucidi, Laura Palagi, and Massimo Roma.
\newblock On some properties of quadratic programs with a convex quadratic
  constraint.
\newblock {\em SIAM Journal on Optimization}, 8(1):105--122, 1998.

\bibitem{Luo2}
Hezhi Luo, Xiaodi Bai, and Jiming Peng.
\newblock Enhancing semidefinite relaxation for quadratically constrained
  quadratic programming via penalty methods.
\newblock {\em Journal of Optimization Theory and Applications},
  180(3):964--992, 2019.

\bibitem{Boyd1}
Jaehyun Park and Stephen Boyd.
\newblock General heuristics for nonconvex quadratically constrained quadratic
  programming.
\newblock {\em arXiv preprint arXiv:1703.07870}, 2017.

\bibitem{blek1}
Pablo~A Parrilo, Grigoriy Blekherman, and Rekha~R Thomas.
\newblock {\em Semidefinite optimization and convex algebraic geometry}.
\newblock SIAM Society for Industrial and Applied Mathematics., 2013.

\bibitem{polik1}
Imre P{\'o}lik and Tam{\'a}s Terlaky.
\newblock A survey of the s-lemma.
\newblock {\em SIAM review}, 49(3):371--418, 2007.

\bibitem{rocka}
Ralph~Tyrell Rockafellar.
\newblock Convex analysis.
\newblock In {\em Convex analysis}. Princeton university press, 2015.

\bibitem{RZ1}
M~Ruiz~Gal{\'a}n.
\newblock A theorem of the alternative with an arbitrary number of inequalities
  and quadratic programming.
\newblock {\em Journal of Global Optimization}, 69(2):427--442, 2017.

\bibitem{sorensen1}
DC~S{\"o}rensen.
\newblock Newton's method with a model trust-region modification.
\newblock Technical report, CM-P00068646, 1980.

\bibitem{Sturm1}
Jos~F Sturm.
\newblock Using sedumi 1.02, a matlab toolbox for optimization over symmetric
  cones.
\newblock {\em Optimization methods and software}, 11(1-4):625--653, 1999.

\bibitem{Vanden1}
Lieven Vandenberghe and Stephen Boyd.
\newblock Semidefinite programming.
\newblock {\em SIAM review}, 38(1):49--95, 1996.

\bibitem{wang1}
Alex~L Wang and Fatma K{\i}l{\i}n{\c{c}}-Karzan.
\newblock On the tightness of sdp relaxations of qcqps.
\newblock {\em Mathematical Programming}, 193(1):33--73, 2022.

\bibitem{yang1}
Boshi Yang, Kurt Anstreicher, and Samuel Burer.
\newblock Quadratic programs with hollows.
\newblock {\em Mathematical Programming}, 170(2):541--553, 2018.

\end{thebibliography}

\end{document}